\renewcommand{\theequation}{\thesection.\arabic{equation}}
\newtheorem{theorem}{Theorem}
\newtheorem{lemma}{Lemma}
\newtheorem{proposition}{Proposition}
\newtheorem{corollary}{Corollary}
\newtheorem{remark}{Remark}
\newtheorem{definition}{Definition}
\newcommand{\eqnsection}{
\renewcommand{\theequation}{\thesection.\arabic{equation}}
    \makeatletter
    \csname  @addtoreset\endcsname{equation}{section}
    \makeatother}
\def\e{{\mathbb E}}
\def\p{{\mathbb P}}
\def\z{{\mathbb Z}}
\def\n{{\mathbb N}}
\def\ee{\mathrm{e}}
\def\d{\, \mathrm{d}}
\def\l{\ell}
\def\ds{\displaystyle}
\def\Z{{\mathbb Z}}
\def\N{{\mathbb N}}
\def\E{{\mathbb E}}
\def\w{\omega}
\def\indic{{\bf 1}}
\author[N. Enriquez]{Nathana\"el ENRIQUEZ}
\address{Laboratoire Modal'X, Universit\'e Paris 10, 200
Avenue de la R\'epublique, 92000 Nanterre, France}
\address{Laboratoire de Probabilit\'es et Mod\`eles Al\'eatoires, CNRS UMR 7599, Universit\'e Paris 6, 4
place Jussieu, 75252 Paris Cedex 05, France}
\email{nenriquez@u-paris10.fr}
\author[C. Sabot]{Christophe SABOT}
\address{Universit\'e de
Lyon, Universit\'e Lyon 1, Institut Camille Jordan, CNRS UMR 5208,
43, Boulevard du 11 novembre 1918, 69622 Villeurbanne Cedex,
France} \email{sabot@math.univ-lyon1.fr}
\author[O. Zindy]{Olivier ZINDY}
\address{Laboratoire Modal'X, Universit\'e Paris 10, 200
Avenue de la R\'epublique, 92000 Nanterre, France}
\address{Weierstrass Institute for Applied Analysis and Stochastics,
Mohrenstrasse 39, 10117 Berlin, Germany}
\email{olivier.zindy@u-paris10.fr}
\keywords{Random walk in random environment, aging, quenched
localization} \subjclass[2000]{primary 60K37;
secondary 60G50, 60J45, 82D30}
\title[Aging and quenched localization for RWRE]{Aging and quenched localization for one-dimensional random walks in random environment in the sub-ballistic regime}
\begin{document}

\maketitle

\bigskip

{\footnotesize \noindent{\slshape\bfseries Abstract.} }
We consider transient one-dimensional random walks in a random environment with zero asymptotic speed. An aging phenomenon involving the generalized Arcsine law is proved using the localization of the walk at the foot of ``valleys" of height $\log t$. In the quenched setting, we also sharply estimate the distribution of the walk at time $t$.
\bigskip
\bigskip

\section{Introduction}

One-dimensional random walks in random environment have been the subject of constant interest in physics and mathematics for the last thirty years since they naturally appear in a great variety of situations in physics and biology.

In 1975, Solomon gave, in a seminal work
\cite{solomon}, a criterion of transience-recurrence for such
walks moving to the nearest neighbours, and shows that three different regimes can be distinguished:
the random walk may be recurrent,  or transient with a positive
asymptotic speed, but it may also be transient with zero asymptotic
speed. This  last regime, which does not exist among usual random
walks, is probably the one which is the less well understood and its
study is the purpose of the present paper.

Let us first recall the main existing results concerning the other
regimes. In his paper, Solomon  computes the asymptotic speed of
transient regimes. In 1982, Sinai states, in \cite{sinai}, a limit
theorem in the recurrent case. It turns out that the motion in this
case is unusually slow. Namely, the position of the walk at time $n$
has to be normalized  by $(\log n)^2$ in order to present a non
trivial limit. In 1986, the limiting law is characterized
independently by Kesten \cite{kesten86} and Golosov \cite{golosov2}.
Let us notice here that, beyond the interest of his result, Sinai
introduces a very powerful and intuitive tool in the study of
one-dimensional random walks in random environment. This tool is the
potential, which is a function on $\z$ canonically associated to the
random environment. The potential itself is a usual random walk when the
transition probabilities at each site are independent and
identically distributed (i.i.d.).

The proof by Sinai of an annealed limit law in the recurrent case is based on a quenched localization result. Namely, a notion of valley of the potential is introduced, as well as an order on the set of valleys. It is then proved that the walk is localized at time $t$, with a probability converging to 1, around the bottom of the smallest valley of depth bigger than $\log t$ surrounding the origin. An annealed convergence in law of this site normalized by $(\log t)^2$ implies the annealed limiting law for the walk.

  In the case of  transient random walks in random environment with zero asymptotic speed, the proof of the limiting law by Kesten, Kozlov and Spitzer \cite{kesten-kozlov-spitzer} does not follow this scheme. Therefore an analogous result to Sinai's localization in the quenched setting was missing. As we will see, the answer to this question is more complicated than in the recurrent case but still very explicit.

  In the setting of sub-ballistic transient random walks, the valleys we introduce are, like in \cite{enriquez-sabot-zindy-1} and \cite{peterson-zeitouni},  related to the excursions of the potential above its past minimum. Now, the  key observation is that with a probability converging to 1, the particle at time
 $t$ is located at the foot of a valley having depth and width of order $\log t$. Therefore, since the walk spends a random time of order $t$ inside a valley of depth $\log t$, it is not surprising that this random walk exhibits an aging phenomenon.

What is usually called aging is a dynamical out-of-equilibrium physical phenomenon observed in disordered systems like spin-glasses at low temperature, defined by the existence of a limit of a given two-time correlation function of the system as both times diverge keeping a fixed ratio between them; the limit should be a non-trivial function of the ratio. It has been extensively studied in the physics literature, see \cite{bouchaud-cugliandolo-kurchan-mezard} and therein references.

 More precisely, in our setting, Theorem \ref{t:main} expresses that, for each given ratio $h>1$, the probability that the particle remains confined within the same valley during the time interval $[t,th]$. This probability is expressed  in terms of the generalized Arcsine law, which confirms the status of universality ascribed to this law by Ben Arous and \v Cern\'y in their study of aging phenomena arising in trap models \cite{benarous-cerny06a}.

Recall that the  trap model is a model of random walk that was first  proposed by Bouchaud and Dean \cite{bouchaud, bouchaud-dean} as a toy model for studying this aging phenomenon.
In the mathematics litterature, much attention has recently been given to the trap model, and many aging result were derived from it, on $\z$ in \cite{fontes-isopi-newman} and \cite{benarous-cerny05},  on $\z^2$ in \cite{benarous-cerny-mountford}, on $\z^d$ $(d \ge 3)$ in \cite{benarous-cerny07b}, or on the hypercube in  \cite{benarous-bovier-gayrard03a,benarous-bovier-gayrard03b}. A comprehensive approach to obtaining aging results for the trap model in various settings was later developed in \cite{benarous-cerny07a}.

Let us finally mention that Theorem \ref{t:main} generalizes the aging result obtained by heuristical methods of renormalization by Le Doussal, Fisher and Monthus in \cite{LeDoussal-Fisher-Monthus}
in the limit  case when the bias  of the random walk defining the potential tends to 0 (the case when this bias is $0$ corresponding to the recurrent regime for the random walk in random environment).
The recurrent case, which also leads to aging phenomenon, was treated in the same article and rigorous arguments were later presented by Dembo, Guionnet and Zeitouni in \cite{dembo-guionnet-zeitouni}.

The second aspect of our work concerns localization properties of the walk and can be considered as the analog of Sinai's localization result  in the transient setting. Unlike the recurrent case, the random walk is not localized near the bottom of a single valley. Nevertheless, if one introduces a confidence threshold $\alpha$, one can say that, asymptotically, at time $t$,  with a probability converging to 1 on the environment, the walk is localized with probability bigger than
  $\alpha$  around the bottoms of a finite number of valleys having depth of order $\log t$. This number depends on $t$ and on the environment, but is not converging to infinity with $t$.
  Moreover, in Theorem \ref{t:main+} and Corollary \ref{c:main} we sharply estimate the probability for the walk of being at time $t$ in each of these valleys.

\section{Notation and main results}
Let $\omega:=(\omega_i, \, i \in \z)$ be a family of i.i.d. random
variables taking values in $(0,1)$ defined on $\Omega,$ which
stands for the random environment. Denote by $P$ the distribution
of $\omega$ and by $E$ the corresponding expectation. Conditioning
on $\omega$ (i.e. choosing an environment), we define the random
walk in random environment $X=(X_n, \, n \ge 0)$ on $\z^{\n}$ as a
nearest-neighbor random walk on $\z$ with transition probabilities
given by $\omega$: $(X_n, \, n \ge 0)$ is the Markov chain
satisfying $X_0=0$ and for $n \ge 0,$
\begin{eqnarray*}
P_\omega \left( X_{n+1} = x+1 \, | \, X_n =x\right) &=& \omega_x,
\\
P_\omega \left( X_{n+1} = x-1 \, | \, X_n =x\right)&=& 1-\w_x.
\end{eqnarray*}

\noindent We denote by $P_{\omega}$ the law of $(X_n, \, n \ge 0)$
and $E_{\omega}$ the corresponding expectation. We denote by $\p$
the joint law of $(\omega,(X_n)_{n \ge 0})$. We refer to Zeitouni
\cite{zeitouni} for an overview of results on random walks in
random environment.  Let us introduce
$$
\rho_i:= \frac{1-\omega_i}{\omega_i}, \qquad i \in \z.
$$
Our first main result is the following theorem which shows aging
phenomenon in the transient  sub-ballistic regime.

\medskip

\begin{theorem}
\label{t:main} Let $\omega:=(\omega_i, \, i \in \z)$ be a family
of independent and identically distributed random variables such
that
\begin{itemize}
  \item[{\it (a)}] there exists $0<\kappa<1$ for which
   $E \left[  \rho_0^{\kappa} \right]=1$ and $E \left[  \rho_0^{\kappa} \log^+ \rho_0 \right]<
   \infty,$
  \item [{\it (b)}] the  distribution  of $\log \rho_0$  is non-lattice.
\end{itemize}
Then, for all $h>1$ and all $\eta>0,$ we have
$$
\lim_{t\to\infty} \p( \vert X_{th}-X_t\vert \le \eta \log t )= {\sin(\kappa\pi)\over \pi} \int_{0}^{1/h} y^{\kappa-1} (1-y)^{-\kappa}\d y.
$$
\end{theorem}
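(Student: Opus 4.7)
The plan is to reduce the aging event to a renewal statement about sums of heavy-tailed crossing times of potential valleys, and then to apply the Dynkin--Lamperti arcsine theorem for a stable subordinator.

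First I would introduce the potential $V(n):=\sum_{i=1}^n\log\rho_i$ (extended to $n<0$ analogously) and partition $\z_+$ into ``hills'' given by excursions of $V$ strictly above its running minimum. Since $E[\rho_0^\kappa]=1$ with $\rho_0$ non-degenerate by (b), Jensen's inequality gives $E[\log\rho_0]<0$, so $V$ drifts to $-\infty$ and the excursions are a.s.\ finite. Following Enriquez--Sabot--Zindy and Peterson--Zeitouni, I would call \emph{deep} the valleys of height at least $(1-\delta)\log t$ for a small $\delta>0$: only these contribute non-negligibly to the time budget up to $t$, and while crossing a given deep valley the walk spends essentially all of its time in an $o(\log t)$-window about the foot.

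Next I would sharpen the estimate of the time $T$ needed to cross one deep valley of height $h\approx\log t$. The explicit one-dimensional hitting-time formulas in a potential yield $T\approx W\,e^h$ with $W$ a positive functional of the local valley shape (essentially a sum of exponential-type variables), and the same formulas quantify the confinement at the foot claimed above. By Iglehart's classical theorem, the height $H$ of a generic excursion of the negative-drift random walk $V$ satisfies $\p(H>h)\sim c\,e^{-\kappa h}$ under assumption (a) (with (b) giving the non-lattice refinement); pulling this tail through $T\approx W\,e^H$ yields $\p(T>s)\sim c'\,s^{-\kappa}$. A standard truncation argument then makes the successive crossing times $T_1,T_2,\ldots$ of deep valleys asymptotically i.i.d.

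Finally I would apply the stable functional limit theorem: the partial sums $\Sigma_n:=T_1+\cdots+T_n$, after normalization, converge in law to a one-sided $\kappa$-stable subordinator $\sigma$, and the counting process $N_s:=\#\{k:\Sigma_k\le s\}$ converges to its inverse $L_s:=\inf\{u:\sigma_u>s\}$. Because the walk sits at the foot of the $N_t$-th deep valley with probability $1-o(1)$, and because consecutive deep feet are separated in $\z$ by distances that are much larger than $\log t$ (typically of order $t^\kappa$), the event $\{|X_{th}-X_t|\le\eta\log t\}$ coincides up to $o(1)$ with $\{N_{th}=N_t\}=\{\sigma_{L_t}>th\}$. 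The joint density of $(\sigma_{L_t-},\sigma_{L_t})$ for a $\kappa$-stable subordinator is explicit, and a direct integration (followed by the substitution $u=thy$) gives $\p(\sigma_{L_t}>th)=\frac{\sin(\kappa\pi)}{\pi}\int_0^{1/h}y^{\kappa-1}(1-y)^{-\kappa}\d y$, which is the required formula.

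The hardest part is the quenched crossing-time analysis: one needs $T$ to be sharply approximated by $W\,e^H$ for an \emph{explicit} environmental functional $W$, in order to transfer the $e^{-\kappa H}$ tail cleanly to a polynomial tail for $T$, and one simultaneously needs $1-o(1)$ concentration of the sojourn time at the very bottom, without which the ``same valley'' reformulation of the aging event would fail. A secondary technical point is the asymptotic independence of the $T_k$'s, handled by coupling across the shallow pieces between consecutive deep valleys, together with a routine check that the initial transient phase before the first deep valley contributes $o(t)$ to the time.
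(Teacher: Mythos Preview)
Your overall architecture---potential, excursion valleys, localization at the feet of deep valleys, reduction of the aging event to ``same valley at times $t$ and $th$'', then an arcsine computation---matches the paper exactly. The difference lies in how the arcsine limit is obtained.

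You propose to prove a tail estimate $\p(T>s)\sim c's^{-\kappa}$ for the valley crossing time, invoke a stable functional limit theorem for the partial sums $\Sigma_n$, and read off the Dynkin--Lamperti formula for the limiting $\kappa$-stable subordinator. The paper does \emph{not} do this. Because its deep valleys are defined through the $t$-dependent threshold $h_t=\log t-\log\log t$, the crossing times $(\tau_i^*)_{i\ge1}$ form a triangular array rather than a fixed i.i.d.\ sequence; the paper therefore proves the Dynkin statement (its Proposition~1) directly, by establishing a uniform Laplace-transform asymptotic $\E[1-e^{-\lambda\tau_1^*/t}]\sim C\,\lambda^\kappa/(t^\kappa P(H\ge h_t))$ (Lemma~9, imported from the authors' earlier work) and then running the classical Feller argument by hand (Lemmas~10--11). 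This buys them a self-contained treatment of the $t$-dependence and avoids any topology issues for the FCLT; what your route buys is a cleaner conceptual picture, since once the subordinator limit is in place the arcsine formula is just a quotation.

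Your sketch is a bit loose precisely at this junction: you define ``deep'' relative to $t$ but then speak of the $T_k$'s as if they were a fixed i.i.d.\ sequence with a regularly varying tail. To make your approach rigorous you must either (i) work with the crossing times of \emph{all} excursions, prove the unconditioned tail $\p(T>s)\sim c's^{-\kappa}$ (this is essentially the content of the authors' companion paper on Kesten's constant), apply the standard stable FCLT, and then separately argue that shallow excursions contribute $o(t)$; or (ii) keep the $t$-dependent truncation and prove the Dynkin limit for the triangular array, which is exactly what the paper does. Either way the localization input (your ``hardest part'') is identical to the paper's Part~(i) of Theorem~2.
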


\medskip

\begin{remark}\label{r:1}
The statement of Theorem \ref{t:main} could be improved in the
following way: the size of the localization window $\eta \log t$
could be replaced by any positive function $a(t)$ such that
$\lim_{t\to\infty} a(t)=+\infty$ and $a(t)=o(t^\kappa)$ (the
authors would like to thank Yueyun Hu who raised this question).
The extra constraint $a(t)=o(t^{\kappa})$ comes from the fact that
$t^\kappa$ is the order of the distance between successive valleys
where the RWRE can be localized. We did not write the proof of the
theorem in this more general version since it induces several
extra technicalities and makes the proof harder to read. Moreover
$\eta\log t $ represents an arbitrary portion of a typical valley
(which is of size of order $\log t$) where the RWRE can be
localized, and is therefore a natural localization window. 
\end{remark}

Let us now recall some basic result about $X_n$:
under the same assumptions (a)-(b), Kesten, Kozlov and Spitzer
\cite{kesten-kozlov-spitzer} proved that $X_n/n^{\kappa}$
converges in law to $C_\kappa({1\over
{\mathcal{S}_{\kappa}^{ca}}})^{\kappa}$ where $C_\kappa$ is a positive
parameter and $\mathcal{S}_{\kappa}^{ca}$ is the normalized
positive stable law of index $\kappa$, i.e. with Laplace transform
\begin{eqnarray*}
E[\ee^{-\lambda \mathcal{S}_{\kappa}^{ca}}]=\ee^{-
\lambda^{\kappa}}, \qquad \forall \lambda>0.
\end{eqnarray*}
In \cite{enriquez-sabot-zindy-2,enriquez-sabot-zindy-1} we gave a
different proof of this result and we were able to give an explicit
expression for the constant $C_\kappa.$

  The proof was based on a precise
analysis of the potential associated with the environment, as it was
defined by Sinai for its analysis of the recurrent case, see
\cite{sinai}. In this paper, we use the techniques developed in
\cite{enriquez-sabot-zindy-2,enriquez-sabot-zindy-1} to prove
Theorem \ref{t:main}.
The potential, denoted by $V= (V(x), \; x\in \z),$ is a function of
the environment $\omega.$ It is defined as follows:
$$
V(x) :=\left\{\begin{array}{lll} \sum_{i=1}^x \log \rho_i & {\rm if}
\ x \ge 1,
\\
 0 &  {\rm if} \ x=0,
\\
-\sum_{i=x+1}^0 \log \rho_i &{\rm if} \ x\le -1.
\end{array}
\right.
$$

\noindent Furthermore, we consider the weak descending ladder epochs
for the potential defined by $e_0:=0$ and
\begin{eqnarray*}
  e_i := \inf \{ k > e_{i-1}: \; V(k) \le V(e_{i-1})\}, \qquad i
  \ge 1,
\end{eqnarray*}

\noindent which play a crucial role in our proof. Observe that the sequence
$(e_i-e_{i-1})_{i \ge 1}$ is a family of i.i.d. random variables.
Moreover, classical results of fluctuation theory (see
\cite{feller}, p.~$396$), tell us that, under assumptions
$(a)$-$(b)$ of Theorem \ref{t:main},
\begin{eqnarray}
\label{excuinteg} E[e_1]< \infty.
\end{eqnarray}

\noindent Now, observe that the sequence $((e_i,e_{i+1}])_{i \ge 0}$ stands
for the set of excursions of the potential above its past minimum.
Let us introduce $H_i,$ the height of the excursion $[e_i,e_{i+1}]$
defined by 
\begin{eqnarray}
H_i := \max_{e_i \le k \le e_{i+1}}
\left(V(k)-V(e_i)\right), \qquad i \ge 0.
\end{eqnarray}
Note that the $(H_i)_{i
\ge 0}$'s are i.i.d. random variables.

For $t\in \N,$ we introduce the critical height
\begin{equation}\label{criticalheight}
h_t:=\log t- \log\log t.
\end{equation}
As in \cite{enriquez-sabot-zindy-1} we define the deep valleys from
the excursions which are higher than the critical height $h_t$. Let
$(\sigma(j))_{j \ge 1}$ be the successive indexes of excursions,
whose heights are greater than $h_t.$ More precisely,
\begin{eqnarray*}
\sigma(1)&:=&\inf \{ i \ge 0 : H_i \ge h_t\},
\\
\sigma(j)&:=& \inf \{ i >\sigma(j-1): H_i \ge h_t \}, \qquad j \ge
2.
\end{eqnarray*}

\noindent We consider now some random variables depending only on
the environment, which define the deep valleys.

\medskip

\begin{definition}
 \label{defvalley}
For all  $j\ge 1,$ let us introduce
\begin{eqnarray*}
b_j&:=&e_{\sigma(j)},
\\
a_j&:=&\sup \{ k \le b_j : \, V(k)-V(b_j) \ge D_t  \},
\\
T_{j}^{\uparrow}&:=&\inf \{ k \ge b_j : \, V(k)-V(b_j) \ge h_t \},
\\
\overline{d}_j&:=&e_{\sigma(j)+1},
\\
c_j&:=&\inf \{ k \ge b_j : \, V(k)=\max_{b_j \le x \le
\overline{d}_j} V(x) \},
\\
d_j&:=&\inf \{ k \ge \overline{d}_j: \, V(k)-V(\overline{d}_j) \le
-D_t \}.
\end{eqnarray*}

\noindent where $D_t:=(1+\kappa) \log t.$ We call
$(a_j,b_j,c_j,d_j)$ a deep valley and denote by $H^{(j)}$ the height
of the $j$-th deep valley.
\end{definition}

Moreover, let us introduce the first hitting time of $x,$ denoted by
$$
\tau(x):=\inf \{n \ge 1: \; X_n=x\}, \qquad x \in \z,
$$

and the index of the last visited deep valley at
time $t,$ defined by
$$
\l_t:=\sup \{n \ge 0: \; \tau(b_n) \le t\}.
$$
Before stating the quenched localization result, recall that $X$ is defined on the sample probability space $\z^{\n}.$ Then, let us introduce ${\bf e}=({\bf e}_i, \; i \ge 1)$ a sequence of i.i.d. exponential random variables with parameter $1,$ independent of $X.$ We define ${\bf e}$ on a probability space  $\Xi$ and denote its law by $P^{{\bf (e)}}.$
In order to express the independence between $X$ and   ${\bf e},$ we consider for each environment $\omega$, the probability space $(\z^{\n} \times \Xi, P_{\omega} \times P^{{\bf (e)}})$ on which we define $(X,{\bf e}).$

Furthermore, let us define the {\it weight} of the $k$-th deep valley by
$$  W_k(\omega):= 2 \sum_{{a_k\leq m\leq n}\atop{b_k\leq n\leq d_k}}e^{V_\omega(n)-V_\omega(m)}.$$
Moreover, let us introduce the following integer, for any $t\ge0,$
$$\l_{t,\omega}^{({\bf e})}:=\sup \Big\{i \ge 0: \;  \sum_{k=1}^{i}W_k(\omega){\bf e}_k \le t\Big\}.
$$
We are now able to state our second main result.
\medskip

\begin{theorem}
\label{t:main+} Under assumptions $(a)$-$(b)$ of Theorem
\ref{t:main}, we have,
\begin{itemize}
  \item[{\it (i)}] for all $\eta>0$,
  $$
   \lim_{t\to\infty} \p(\vert X_{t}-b_{\l_t}\vert \le \eta \log t)=1,
  $$
  \item [{\it (ii)}]  for all $\delta>0,$
$$
\lim_{t\to \infty} P\Big(d_{TV}(\l_{t},\l_{t,\omega}^{({\bf e})}+1)>\delta\Big)=0,$$
where $d_{TV}$ denotes the distance in total variation.
\end{itemize}
\end{theorem}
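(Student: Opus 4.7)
The plan is to prove both parts via a quenched approximation of the hitting times $\tau(b_{j+1})$ by the ``clocks'' $\sum_{k=1}^{j}W_k(\omega)\,{\bf e}_k$. Specifically, I would show that for $P$-typical $\omega$, and for a suitable deterministic bound $N_t$ with $P(\l_t\le N_t)\to 1$,
\[
\max_{j\le N_t}\Bigl|\tau(b_{j+1})-\sum_{k=1}^{j}W_k(\omega)\,{\bf e}_k\Bigr|=o(t)
\]
in $P_\omega\times P^{({\bf e})}$-probability. Part (ii) then follows immediately: the event $\{\l_t=n\}=\{\tau(b_n)\le t<\tau(b_{n+1})\}$ coincides with $\{\l_{t,\omega}^{({\bf e})}=n-1\}$ except on the set where some partial sum of $W_k\,{\bf e}_k$ lies within $o(t)$ of $t$, and this exceptional set has $o(1)$ probability by the absolute continuity at scale $t$ of the distribution of the relevant $\sum W_k\,{\bf e}_k$.

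The coupling is built in two independent steps, both inspired by the scheme of \cite{enriquez-sabot-zindy-2,enriquez-sabot-zindy-1}. \emph{Shallow regions}: the time spent between the exit $\overline{d}_k$ of deep valley $k$ and the entrance $b_{k+1}$ of deep valley $k+1$ involves only excursions of the potential of height $<h_t$. Kesten--Kozlov--Spitzer-style moment bounds on these crossings give an $L^1$ estimate per shallow step that is much smaller than $t/N_t$, so their sum over $k\le N_t$ is $o(t)$. \emph{Deep-valley crossings}: starting from $b_k$, the walker performs a geometric number of excursions that return to $b_k$, with per-excursion escape probability $\approx 2/W_k(\omega)$ through $T_k^{\uparrow}$ (the standard electrical-network/Kemeny computation, using the explicit form of $W_k$ as a sum of $e^{V(n)-V(m)}$) and conditional excursion duration $O(1)$. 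The geometric-to-exponential passage then couples the total crossing time of the $k$-th deep valley with $W_k(\omega)\,{\bf e}_k$ up to a multiplicative error $1+o(1)$.

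For (i), on the event $\l_t=n$ the walker has reached $b_n$ but not $b_{n+1}$ by time $t$, so it sits in $[a_n,T_n^{\uparrow}]$ throughout $[\tau(b_n),t]$, up to the event (covered by the shallow estimate) that it has already crossed $T_n^{\uparrow}$ and is about to hit $b_{n+1}$ within extra time $o(W_n)$. On $[a_n,T_n^{\uparrow}]$ it is a reversible Markov chain with invariant measure $\mu(x)\propto e^{-V(x)}$; since $V$ drifts upward with asymptotic slope $\simeq\kappa$ on either side of $b_n$, this $\mu$ concentrates exponentially on the window $|x-b_n|\le\eta\log t$. A mixing-time bound on this polylogarithmic-sized state space then shows the distribution of $X_t$ is close in total variation to $\mu$, hence is in the window with probability $1-o(1)$.

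The main obstacle is upgrading the single-valley exponential approximation of \cite{enriquez-sabot-zindy-1,enriquez-sabot-zindy-2} into a quenched coupling that is uniform over $k\le N_t$. Because the $W_k(\omega)$ are i.i.d.\ heavy-tailed of index $\kappa\in(0,1)$, the sum $\sum_{k\le N_t}W_k\,{\bf e}_k$ is dominated by a few large terms, so the per-valley coupling error must be multiplicative in $W_k$ and uniform in its size. This forces a decomposition of the deep valleys according to the scale of $W_k(\omega)$ and a union bound exploiting the i.i.d.\ structure of the excursions $(e_{i}-e_{i-1})$.
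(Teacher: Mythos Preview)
Your overall architecture for part~(ii) is essentially the paper's: reduce to the i.i.d.\ $*$-valleys, decompose each crossing time as a geometric number of failed returns plus a success, couple the geometric to an exponential, and show that the resulting clock $\sum_{k\le i} W_k(\omega){\bf e}_k$ tracks the true hitting times to within a multiplicative $1+\xi$. The paper does not, however, decompose the valleys by the scale of $W_k$; instead it proves the per-valley coupling holds with probability $1-o((\log t)^{-2})$ uniformly in $i$ (via Bienaym\'e--Chebyshev on the failures and an $h$-process bound on the success), and then simply union-bounds over the $K_t\le (\log t)^{(1+\kappa)/2}$ deep valleys. Your scale-decomposition is unnecessary extra work.

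For part~(i) there is a genuine gap. You propose to show that $X_t$ is close to the invariant measure $\mu\propto e^{-V}$ on $[a_n,T_n^{\uparrow}]$ (or a similar window) via a mixing-time bound. But the mixing time of the reflected birth--death chain on $[a_n,\overline c_n]$ is of order $e^{H^{(n)}-h_t/3}$, since the chain must climb from $b_n$ to near $c_n$ to equilibrate; this can be much larger than $t$ when $H^{(n)}$ is large, which is precisely the regime that dominates. The paper sidesteps mixing entirely by the reversibility inequality
\[
P_{\omega,|a_i,\overline c_i|}^{b_i}(X_k=x)=\frac{\pi_i(x)}{\pi_i(b_i)}\,P_{\omega,|a_i,\overline c_i|}^{x}(X_k=b_i)\le \pi_i(x),
\]
valid for all $k\ge 0$ once one normalizes $\pi_i(b_i)=1$; since the walk starts at the mode of $\pi_i$, its law is pointwise dominated by $\pi_i$ at every time, and one only needs $\sum_{x\in O_i}\pi_i(x)$ to be small. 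Establishing that smallness is itself nontrivial: your claim that ``$V$ drifts upward with asymptotic slope $\simeq\kappa$ on either side of $b_n$'' is not automatic, because on $[b_n,c_n]$ one is working under the conditioning $H\ge h_t$. The paper handles this by a Girsanov tilt $\tilde\mu=\rho_0^\kappa\mu$ combined with a large-deviation estimate (its Lemma~\ref{l:sharpness}), which you should incorporate. Finally, the trapping interval should be $[a_n,\overline c_n]$ rather than $[a_n,T_n^{\uparrow}]$: after first hitting $T_n^{\uparrow}$ the walk can fall back to $b_n$ many times, and what one actually controls (via the event $LT(t)$) is that once the walk reaches $\overline c_n$ it exits the valley within $t/\log t$.
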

\medskip
\begin{remark}\label{r:2}
The statement of Theorem \ref{t:main+} could be improved in the following
way: the choice of the critical height $h_t$ is in some way
arbitrary and we could take for $h_t$ any positive function such
that $\lim_{t\to\infty} h_t=\infty$ and $e^{h_t}=o(t)$. The
meaning is that at time $t$ the RWRE is localized at the bottom of
a deep valley, deep meaning that its height $H$ is such that $e^H$
is of order $t$. Furthermore, as in Theorem \ref{t:main}, the size
of the localization window $\eta\log t$ could be replaced by any
positive function $a(t)$ such that $\lim_{t\to\infty}
a(t)=\infty$. 
\end{remark}



\medskip

We remark that we can easily deduce the following quenched
localization in probability result by assembling part $(i)$ and part $(ii)$ of
Theorem \ref{t:main+}. We precise that our quenched localization result is in probability
because one should not expect an almost sure result here, since no almost sure quenched limit results are expected to hold, see \cite{peterson-zeitouni}.
For $y<x,$ we denote by $E_{\omega}^{x}$ the expectation associated with the law $P_{\omega}^{x}$ of the particle in the environment $\omega,$ started at $x.$ 
\begin{corollary}
\label{c:main} Under assumptions $(a)$-$(b)$ of Theorem
\ref{t:main}, we have, for all $\delta,\eta>0$, that

$\ds P\Bigg(\sum_{i\geq 1}\Bigg\vert
P_{0,\omega}(\vert X_{t}-b_i\vert \le \eta \log t)-
P^{({\bf e})}\bigg(\sum_{k=1}^{i-1}W_k(\omega){\bf e}_k\leq t<
\sum_{k=1}^{i}W_k(\omega){\bf
e}_k\bigg)\Bigg\vert>\delta\Bigg)$
converges to 0, when $t$ tends to $\infty$.
\end{corollary}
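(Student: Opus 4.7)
The plan is to combine parts (i) and (ii) of Theorem~\ref{t:main+} through a triangle inequality bridged by the quenched distribution of $\l_t$. Setting
\begin{equation*}
p_i(\omega):=P_{0,\omega}(|X_t-b_i|\leq\eta\log t),\quad q_i(\omega):=P_{0,\omega}(\l_t=i),\quad r_i(\omega):=P^{({\bf e})}(\l_{t,\omega}^{({\bf e})}+1=i),
\end{equation*}
the sum appearing in the corollary is $\sum_{i\geq 1}|p_i(\omega)-r_i(\omega)|$, so it is enough to show that both $\sum_{i\geq 1}|p_i(\omega)-q_i(\omega)|$ and $\sum_{i\geq 1}|q_i(\omega)-r_i(\omega)|$ tend to $0$ in $P$-probability.

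The second convergence is immediate: since $\l_t$ (under $P_{0,\omega}$) and $\l_{t,\omega}^{({\bf e})}+1$ (under $P^{({\bf e})}$) are both $\n$-valued, one has $\sum_{i\geq 1}|q_i(\omega)-r_i(\omega)|=2\,d_{TV}(\l_t,\l_{t,\omega}^{({\bf e})}+1)$, and this quantity converges to $0$ in $P$-probability by part~(ii) of Theorem~\ref{t:main+}.

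For the first convergence, the key observation is that the valley bottoms are well separated compared to the localization window $\eta\log t$. I would fix a deterministic sequence $M_t\to\infty$ with $\p(\l_t>M_t)\to 0$ (such a sequence exists since $\l_t$ is tight at scale $(\log t)^{\kappa}$; e.g.\ $M_t=t^{\kappa/2}$ is amply sufficient), and introduce the environment event $\mathcal{V}_t:=\{b_{k+1}-b_k>2\eta\log t\text{ for every }1\leq k\leq M_t\}$. The gap $b_{k+1}-b_k\geq\sigma(k+1)-\sigma(k)$ dominates a geometric random variable with parameter $q_t:=P(H_0\geq h_t)\sim Ct^{-\kappa}(\log t)^{\kappa}$ (by the classical excursion-tail estimate $P(H_0\geq x)\sim Ce^{-\kappa x}$), so each typical gap is of order $t^{\kappa}(\log t)^{-\kappa}\gg\log t$, and a union bound yields $P(\mathcal{V}_t)\to 1$. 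On $\mathcal{V}_t\cap\{\l_t\leq M_t\}\cap\{|X_t-b_{\l_t}|\leq\eta\log t\}$, the pairwise separation of $b_1,\dots,b_{M_t+1}$ together with the bound $X_t<b_{\l_t+1}$ forces $\indic_{|X_t-b_i|\leq\eta\log t}=\indic_{\l_t=i}$ for every $i\geq 1$, whence
\begin{equation*}
\sum_{i\geq 1}\bigl|\indic_{|X_t-b_i|\leq\eta\log t}-\indic_{\l_t=i}\bigr|\leq 2\bigl(\indic_{\mathcal{V}_t^c}+\indic_{\l_t>M_t}+\indic_{|X_t-b_{\l_t}|>\eta\log t}\bigr).
\end{equation*}
Integrating against $P_{0,\omega}$ then bounds $\sum_{i\geq 1}|p_i(\omega)-q_i(\omega)|$ by three pieces that each vanish in $P$-probability: $\indic_{\mathcal{V}_t^c}$ by the separation estimate, and $P_{0,\omega}(\l_t>M_t)$ and $P_{0,\omega}(|X_t-b_{\l_t}|>\eta\log t)$ by Markov/Fubini applied to their $\p$-expectations, which tend to $0$ by the choice of $M_t$ and by part~(i) of Theorem~\ref{t:main+} respectively.

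The main technical step is the valley separation estimate $P(\mathcal{V}_t)\to 1$: given the i.i.d.\ structure of the $\sigma$-gaps and the tail of $H_0$ already used in the paper's preparatory material, it is routine, but it is the only ingredient beyond parts (i) and (ii) of Theorem~\ref{t:main+} that enters the proof. The triangle inequality then delivers the corollary.
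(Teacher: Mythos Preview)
Your argument is correct and is exactly the ``assembling of parts (i) and (ii)'' that the paper invokes without details; you have in fact supplied the proof the paper omits, and your separation event $\mathcal V_t$ is a weak form of the paper's $A_3(t)$. One small slip: the displayed pointwise bound $\sum_{i\ge 1}|\indic_{|X_t-b_i|\le\eta\log t}-\indic_{\l_t=i}|\le 2(\indic_{\mathcal V_t^c}+\indic_{\l_t>M_t}+\indic_{|X_t-b_{\l_t}|>\eta\log t})$ is not literally true in the edge case $\l_t=M_t$, since $\mathcal V_t$ does not control the gaps $b_{k+1}-b_k$ for $k>M_t$ and hence does not rule out several $b_i$ with $i>M_t+1$ clustering near $X_t$; this is harmless and is repaired either by extending $\mathcal V_t$ to one more gap, or by invoking directly the paper's $A_3(t)\cap A_2(t)$ together with $\p(\l_t\le K_t)\to 1$, which gives separation $\ge t^{\kappa/2}$ for all the indices that matter.
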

\medskip

The content of this result is twofold. It first says that, with a probability converging to 1,  the process at time $t$ is concentrated near the bottom of a valley of depth of order $\log t$. It also determines, for each of these valleys,  the probability that, at time $t$, the particle lies at the bottom of it. This probability is driven by a renewal Poisson process which is skewed by the weights of each of these valleys.

This result may be of interest when trying  to get information on the environment  on the basis of the observation of a sample of trajectories of the particle. See \cite{cocco-monasson} for a recent example of this in a paper on DNA reconstruction.
\section{Notation}
\label{s:deepvalleys}

A result of Iglehart \cite{igle} which will be of constant
use, says that, under assumptions $(a)$-$(b)$ of Theorem
\ref{t:main}, the tail of the height $H_i$ of an excursion above
its past minimum is given by
\begin{equation}\label{iglehartthm}
     P (H_1 > h) \sim C_I \, \ee^{- \kappa h},
     \qquad h \to \infty,
\end{equation}
for a positive constant $C_I$ (we will not need its explicit
value).

The analysis done in
\cite{enriquez-sabot-zindy-2,enriquez-sabot-zindy-1} shows that on
the interval $[0,t]$, $t\in \N$,  the walk $X_n$ spends
asymptotically all its time trying to climb excursions of height of
order $\log t + C$ for a real $C$. Let us now introduce the integer
$$
n_t:=\lfloor t^\kappa \log\log t\rfloor .
$$
The integer $n_t$ will be use to bound the number of excursions
the walk can cross before time $t$. The strategy will be to show that we can neglect the time spent
between two excursions of size smaller than $h_t$, and to show
that at time $t$ the walk $X_t$ is close to the foot of an
excursion of height larger than $h_t$.

\subsection{The deep valleys}
\label{deepvalleys}

Let us define the number of deep valleys in the $n_t$ first excursions by
$$
K_t:=\sup\{j \ge 0 : \; \sigma(j)\le n_t\},
$$
which is the number of excursions higher than the critical height
$h_t$ in the $n_t$ first excursions.

\begin{remark}\label{r:ht}
This definition corresponds to the definition of deep valleys
introduced in \cite{enriquez-sabot-zindy-1} with $n=n_t$, but with a
different critical height. In \cite{enriquez-sabot-zindy-1} the
critical height was $h_n={1-\varepsilon\over \kappa} \log n,$ for
$\varepsilon$ such that $0<\varepsilon<1$. Here, we see that $h_{n_t}$
would be equal to $(1-\varepsilon)\log t+{1-\varepsilon\over \kappa}
\log\log\log t$ which is smaller than our critical height $h_t=\log
t -\log\log t$. This means that the deep valleys are higher and less
numerous in the present paper than in \cite{enriquez-sabot-zindy-1}. We will see that this choice makes possible the control of the localization of the particle in any neighborhood of size $\eta \log t$ around the bottom of the last visited valley (recall Part $(i)$ of Theorem \ref{t:main+}).
\end{remark}

\subsection{The $*$-valleys}
\label{*-valleys}

\noindent Let us first define the maximal variations of the potential
before site $x$ by:
\begin{eqnarray*}
 \label{incrempoten}
    V^{\uparrow}(x):= \max_{0 \le i \le j \le x} (V(j)-V(i)), \qquad x \in
    \n,
    \\
    V^{\downarrow}(x):= \min_{0 \le i \le j \le x} (V(j)-V(i)), \qquad x \in
    \n.
\end{eqnarray*}

\noindent  By extension, we introduce
\begin{eqnarray*}
 \label{incremupgene}
    V^{\uparrow}(x,y):= \max_{x \le i \le j \le y} (V(j)-V(i)),
    \qquad  x<y,
    \\
\label{incremdowngene}
    V^{\downarrow}(x,y):= \min_{x \le i \le j \le y} (V(j)-V(i)),
    \qquad x<y.
\end{eqnarray*}

The deep valleys defined above are not necessarily made of disjoint
portions of the environment. To overcome this difficulty we defined
another type of valleys, called $*$-valleys, which form a
subsequence of the previous valleys. By construction, the $*$-valleys are made
of disjoint portions of environment and will coincide with
high probability with the previous valleys on the portion of the
environment visited by the walk before time $t$.
\begin{eqnarray*}
\gamma^*_1&:=&\inf \{ k \ge 0 : \, V(k) \le -D_t  \},
\\
T_{1}^{*}&:=&\inf \{ k \ge \gamma^*_1 : \,
V^{\uparrow}(\gamma^*_1,k) \ge h_t \},
\\
b^*_1&:=&\sup \{ k \le T_{1}^{*} : \, V(k)=\min_{0 \le x \le
T_{1}^{*}} V(x) \},
\\
a^*_1&:=&\sup \{ k \le b^*_1 : \, V(k)-V(b^*_1) \ge D_t  \},
\\
\overline{d}^*_1&:=&\inf \{ k \ge T_{1}^{*} : \, V(k) \le
V(b_1^*)\},
\\
c^*_1&:=&\inf \{ k \ge b^*_1 : \, V(k)=\max_{b^*_1\le x \le
\overline{d}^*_1} V(x) \},
\\
d^*_1&:=&\inf \{ k \ge \overline{d}^*_1 : \,
V(k)-V(\overline{d}^*_1) \le -D_t  \}.
\end{eqnarray*}

\noindent Let us define the following sextuplets of points by
iteration
$$
(\gamma^*_j,a^*_j,b^*_j,T_{j}^{*},c^*_j,\overline{d}^*_j,d^*_j):=(\gamma^*_1,a^*_1,b^*_1,T_{1}^{*},c^*_1,\overline{d}^*_1,d^*_1)
\circ \theta_{d_{j-1}^*}, \qquad j \ge 2,
$$
where $\theta_{i}$ denotes the $i$-shift operator.

\begin{definition}
 \label{def*valley}
We call a $*$-valley any quadruplet $(a^*_j,b^*_j,c^*_j,d^*_j)$
for $j \ge 1.$ Moreover, we shall denote by $K_t^*$ the number of
such $*$-valleys before $e_{n_t},$ i.e. $ K_t^*:= \sup \{ j \ge 0
: T_{j}^{*} \le e_{n_t} \}.$
\end{definition}

\noindent The $*$-valleys will be made of independent and identically
distributed portions of potential (up to some translation).

\section{Preliminary estimates}
\label{s:preliminaries}
\subsection{Good environments}
We define in this subsection the {\it good environments} in the same manner as we did in
 \cite{enriquez-sabot-zindy-1} to give a complete characterization of the limit law. Since the critical height considered here is not the same (see Remark \ref{r:ht}), the following results are not taken from  \cite{enriquez-sabot-zindy-1} but proved with the same ideas, that we recall in this subsection. Let us introduce the following series of events, which will occur with high probability when $t$ tends to infinity.
\begin{eqnarray*}
 A_1(t)&:=&\left\{ e_{n_t}\le C'n_t \right\},
 \\
  A_2(t) &:=& \left\{K_t\le (\log t)^{{1+\kappa\over 2}} \right\},
     \\
  A_3(t) &:=& \cap_{j=0}^{K_t} \left\{ \sigma(j+1)-\sigma(j) \ge t^{\kappa/2}
    \right\},
     \\
  A_4(t) &:=& \cap_{j=1}^{K_t+1} \left\{ d_j- a_j
    \le C'' \log t \right\},
\end{eqnarray*}
 where $\sigma(0):=0$ (for convenience of notation) and
$C'$, $C''$  stand for positive constants (large enough) which will be specified
below. In words, $ A_1(t)$ bounds the total length of the first $n_t$ excursions.
The event $ A_2(t)$ gives a control on the number of deep valleys while $ A_3(t)$ ensures that they are well separated and  $ A_4(t)$ bounds finely the length of each of them.

\begin{lemma}\label{l:preliminaires}
Let $A(t):= A_1(t)\cap A_2(t)\cap A_3(t)\cap A_4(t)$, then
$$
\lim_{t\to\infty} P(A(t))=1.
$$
\end{lemma}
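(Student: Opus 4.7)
The plan is to handle each event $A_i(t)$ separately and conclude by a union bound, since $P(A(t)^c)\le \sum_{i=1}^4 P(A_i(t)^c)$. The heart of the argument is the tail estimate \eqref{iglehartthm} together with the fact that the law of $V$ drifts to $-\infty$ (consequence of $E[\rho_0^\kappa]=1$ with $\kappa<1$, which forces $E[\log\rho_0]<0$).

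\textbf{Control of $A_1(t)$.} Since the $(e_i-e_{i-1})_{i\ge 1}$ are i.i.d. and $E[e_1]<\infty$ by \eqref{excuinteg}, the strong law of large numbers yields $e_{n_t}/n_t\to E[e_1]$ almost surely. Choosing $C'>E[e_1]$ gives $P(A_1(t))\to 1$.

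\textbf{Control of $A_2(t)$.} Writing $K_t=\sum_{i=0}^{n_t-1}\mathbb{1}_{\{H_i\ge h_t\}}$ as a sum of i.i.d.\ Bernoulli variables and using \eqref{iglehartthm}, one has $E[K_t]\sim C_I\, n_t\, e^{-\kappa h_t}=C_I(\log t)^\kappa \log\log t$. Since $\kappa<\tfrac{1+\kappa}{2}$, Markov's inequality gives $P(K_t>(\log t)^{(1+\kappa)/2})\to 0$.

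\textbf{Control of $A_3(t)$.} Two excursions among the first $n_t$ whose indices differ by less than $t^{\kappa/2}$ and which both have height $\ge h_t$ would be a ``bad pair''. By independence and \eqref{iglehartthm},
\begin{equation*}
P(A_3(t)^c)\le n_t\,t^{\kappa/2}\bigl(P(H_1\ge h_t)\bigr)^2
\le C\,t^\kappa\log\log t\cdot t^{\kappa/2}\cdot\frac{(\log t)^{2\kappa}}{t^{2\kappa}}
=C\,t^{-\kappa/2}(\log t)^{2\kappa}\log\log t,
\end{equation*}
which tends to $0$.

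\textbf{Control of $A_4(t)$.} Decompose $d_j-a_j=(b_j-a_j)+(\overline{d}_j-b_j)+(d_j-\overline{d}_j)$ and bound each piece uniformly in $j\le K_t+1$. The pieces $b_j-a_j$ and $d_j-\overline{d}_j$ are controlled by the hitting time for the random walk $V$ (or its reverse) to accumulate a fluctuation of size $D_t=(1+\kappa)\log t$. Since $V$ has negative drift, an elementary exponential Markov estimate together with the annealed stationarity of the environment gives $P(b_j-a_j>\tfrac{C''}{3}\log t)\le t^{-c}$ for any $c>0$ provided $C''$ is large enough, and similarly for $d_j-\overline{d}_j$. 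For the middle piece, conditional on $H_{\sigma(j)}\ge h_t$, the length $\overline{d}_j-b_j$ of a high excursion has exponential tails: an excursion of length $\ell$ reaching height $h_t$ requires the walk $V$ to overcome an a priori drift of order $\ell$, so again $P(\overline{d}_j-b_j>\tfrac{C''}{3}\log t)\le t^{-c}$ for $C''$ large. Since these bounds are independent of $j$ and apply to the portions of environment beginning at $e_{\sigma(j)}$, a union bound over $j\le K_t+1$ combined with $A_2(t)$ yields
\begin{equation*}
P(A_4(t)^c\cap A_2(t))\le \bigl((\log t)^{(1+\kappa)/2}+1\bigr)\cdot 3\,t^{-c}\longrightarrow 0.
\end{equation*}

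Combining the four estimates proves the lemma. The main obstacle is the last step: the three length-bounds require careful exponential-type estimates for the excursion of the potential, and in particular the middle piece must be bounded under the conditioning $H\ge h_t$, where one has to check that this conditioning does not inflate the length beyond $O(\log t)$. This is where one reuses the fluctuation-theoretic machinery already developed in \cite{enriquez-sabot-zindy-1}, adapted to the slightly different choice of critical height $h_t=\log t-\log\log t$ (cf.\ Remark \ref{r:ht}).
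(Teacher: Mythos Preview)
Your proof is correct and follows essentially the same route as the paper's: law of large numbers for $A_1$, a first-moment/Markov bound for $A_2$, a union bound over close pairs of high excursions for $A_3$, and for $A_4$ a decomposition of $d_j-a_j$ into pieces each controlled by Cram\'er-type large-deviation estimates for the potential (the key input being $I(0)>0$, which is exactly what the paper uses). The only cosmetic difference is that the paper cuts $d_j-a_j$ into four pieces, separating $T_j^\uparrow-b_j$ from $\overline d_j-T_j^\uparrow$ and making the Cram\'er bound for the former explicit, whereas you keep the full excursion length $\overline d_j-b_j$ as a single piece and argue more informally; both succeed for $C''$ large enough.
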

\begin{proof} The fact that $P(A_1(t)) \to 1$ is a consequence of the law of large numbers.
Concerning $A_2(t)$ and $A_3(t)$, we know that the number of excursions higher
than $h_t$ in the first $n_t$ excursions is a binomial random variable with
parameter $(n_t, q_t)$ where $q_t:=P(H_1\ge h_t),$ from which we can easily deduce that $P(A_2(t)\cap A_3(t)) \to 1.$ For example, since
(\ref{iglehartthm}) implies $q_t\sim C_I e^{-\kappa h_t},$ $t \to \infty,$ we have that
$E\left[K_t\right]=n_tq_t\sim C_I\log\log t (\log t)^\kappa$. Using the Markov
inequality we get that $P(A_2(t))$ tends to 1, when $t$ tends to infinity.

The proof for $A_4(t)$  requires a bit more explanations. Since $K_t\le (\log t)^{{1+\kappa\over 2}}$ with probability tending to one, we only  have to prove, for $j\ge1$ that
$P(d_j-a_j \ge C'' \log t) = o( (\log t)^{-{1+\kappa\over 2}}).$
 Furthermore, observe that we can write $d_j-a_j=(d_j-\overline d_j)+(\overline d_j-T_{j}^{\uparrow})+(T_{j}^{\uparrow}-b_j)+(b_j-a_j).$ Therefore, the proof boils down to showing that, for each term in the previous sum, the probability that it is larger than ${C''\over 4} \log t$ is a $o( (\log t)^{-{1+\kappa\over 2}}).$  Here, we only prove that
\begin{equation}
\label{eq:A1}
P(T_{j}^{\uparrow}-b_j \ge {C'' \over 4} \log t) = o( (\log t)^{-{1+\kappa\over 2}}), \qquad t \to \infty,
 \end{equation}
the arguments for the other terms being similar and the results more intuitive.

Let us first introduce $T_h:=\inf \{x \ge 0: \, V(x)\ge h\}$ for any $h>0.$ Then, recalling (\ref{iglehartthm}),
we can write
\begin{equation}
\label{eq:A2}
P(T_{j}^{\uparrow}-b_j \ge {C''\over 4} \log t) \le C \ee^{\kappa h_t} P( {C'' \over 4} \log t \le T_{h_t} < \infty).
 \end{equation}
Denoting by $I(\cdot)$ the convex rate function associated with the potential, we apply Chebychev's inequality in the same manner as is done in the proof of the upper bound in Cramer's theorem (see \cite{denhollander}) and obtain that the probability on the right-hand side in (\ref{eq:A2}) is bounded above by
\begin{eqnarray}
\label{eq:A3}
 \sum_{k \ge \frac{C''}{4} \log t}  P(V(k) \ge h_t)
 \le \sum_{k \ge \frac{C''}{4} \log t}  \ee^{-k \, I\left({h_t \over k}\right)}
 \le  \sum_{k \ge \frac{C''}{4} \log t}  \ee^{-k \, I(0)} \le C t^{-\frac{C''}{4} \,
   I(0)}.
\end{eqnarray}
Now, let us recall that $h_t\le \log t$ by definition. Morever, observe that the assumption $(a)$ implies that $E \left[  \rho_0^{\kappa} \right]=1,$ which yields $I(0)>0$. Then, assembling (\ref{eq:A2}) and (\ref{eq:A3}) yields (\ref{eq:A1}) by choosing $C''$ larger than $4 \kappa / I(0),$ which concludes the proof of Lemma \ref{l:preliminaires}.
\end{proof}

\noindent The following lemma tells us that the $*$-valleys, which are i.i.d., coincide
with the sequence of deep valleys with an overwhelming probability
when $t$ goes to infinity.

\medskip

\begin{lemma}
\label{model1+2} If $A^*(t):=\{  K_t=K_t^* \, ; \,
(a_j,b_j,c_j,d_j)= (a^*_j,b^*_j,c^*_j,d^*_j), \, 1 \le j \le K_t^*
\},$ then we have that the probability $P(A^*(t))$ converges to
$1,$ when $t$ goes to infinity.
\end{lemma}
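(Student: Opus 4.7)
The plan is to work on the event $A(t)$ from Lemma \ref{l:preliminaires} and to establish, outside a further vanishingly small set, both the exact matching of deep valleys with $*$-valleys and the equality $K_t^* = K_t$. The key step is a quantitative lower bound on the descent of the potential between two consecutive big excursions.

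Set $X_i := V(e_i)-V(e_{i-1})$; these are i.i.d., satisfy $X_i \le 0$ a.s., and $E[X_1] =: -\mu < 0$ (strict negativity follows from transience: the convex function $s \mapsto E[\rho_0^s]$ vanishes at $s=0$ and $s=\kappa$, so its derivative at $0$ is negative). Cramér's theorem supplies $c>0$ with $P\bigl(\sum_{i=1}^n X_i \ge -n\mu/2\bigr) \le e^{-cn}$. Because $X_i \le 0$, the partial sums $S_n = \sum_{i=1}^n X_i$ are non-increasing, so on $A_3(t)$ (which ensures $\sigma(j+1)-\sigma(j) \ge t^{\kappa/2}$), the strong Markov property at $e_{\sigma(j)+1}$ combined with a union bound over $j \le K_t \le (\log t)^{(1+\kappa)/2}$ yields, with probability $1-o(1)$,
\[
V(\overline{d}_j) - V(b_{j+1}) \ge \tfrac{\mu}{4}\, t^{\kappa/2}, \qquad 0 \le j \le K_t,
\]
with the convention $\overline{d}_0 := 0$. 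A parallel union bound using $E[\rho_0^{\kappa}\log^+\rho_0]<\infty$ also lets us assume the overshoot $V(\overline{d}_j)-V(d_j)-D_t \le |\log \rho_{d_j}|$ is $O(\log t)$ uniformly in $j$.

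On this favourable event I argue by induction that $(a_j^*,b_j^*,c_j^*,d_j^*) = (a_j,b_j,c_j,d_j)$, with base case $d_0 := 0$. Since $V(b_{j+1})$ lies $\Theta(t^{\kappa/2})$ below $V(\overline{d}_j)$ while $V(d_j)$ differs from $V(\overline{d}_j)$ by only $D_t + O(\log t)$, we have $V(b_{j+1}) \le V(d_j)-D_t$, hence $\gamma^*_{j+1} \le b_{j+1}$. Moreover, every excursion of $V$ above its running minimum inside $[d_j,b_{j+1}]$ has height strictly less than $h_t$ (no big excursion occurs between indices $\sigma(j)$ and $\sigma(j+1)$), so a short comparison yields $V^{\uparrow}(d_j,b_{j+1}) < h_t$; hence $T_{j+1}^*$ lies strictly inside the $(j+1)$-th deep valley, in $(b_{j+1},c_{j+1}]$. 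Since $V(b_{j+1}) = \min_{0 \le k \le b_{j+1}} V(k)$ and $V > V(b_{j+1})$ strictly on $(b_{j+1},\overline{d}_{j+1})$, the rightmost argmin of $V$ on $[d_j,T_{j+1}^*]$ is $b_{j+1}$, so $b_{j+1}^* = b_{j+1}$; the identifications of $a_{j+1}^*$, $c_{j+1}^*$, $d_{j+1}^*$ then follow from the defining formulas, and the inequality $V(d_j) > V(b_{j+1})+D_t$ forces $a_{j+1} \ge d_j$, ruling out overlap between consecutive deep valleys.

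Finally, the matching places $T_j^* \in [b_j,c_j] \subseteq [e_{\sigma(j)}, e_{\sigma(j)+1}]$, so $T_j^* \le e_{n_t}$ whenever $\sigma(j) \le n_t-1$; hence $K_t^* = K_t$ fails only on the boundary coincidence $\sigma(K_t)=n_t$, i.e.\ on $\{H_{n_t} \ge h_t\}$, which has probability $q_t \sim C_I (\log t)^{\kappa}/t^{\kappa} = o(1)$. The main obstacle throughout is the quantitative descent bound of the second paragraph; the decisive observation that makes it clean is $X_i \le 0$, which lets one bound the relevant stopped sum by the unconditional partial sum $S_{t^{\kappa/2}}$ and thereby sidestep any conditioning on $\{H_i < h_t\}$, keeping $\mu$ and $c$ independent of $t$.
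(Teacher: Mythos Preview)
Your argument takes a genuinely different and much more laborious route than the paper's. The paper dispatches the lemma in three lines: it observes that by construction each $*$-valley automatically coincides with \emph{some} deep valley (so the $*$-valleys always form a subsequence of the sequence of deep valleys), and then notes that on $A_3(t)\cap A_4(t)$ the deep valleys are spatially disjoint, which forces this subsequence to exhaust the full sequence; Lemma~\ref{l:preliminaires} then finishes. Your approach---an explicit inductive matching driven by a quantitative lower bound on the potential descent between consecutive big excursions---is correct in outline, and the Cram\'er estimate together with the monotonicity trick $X_i\le 0$ is a clean device, but it reproves from scratch what the paper obtains structurally.

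There is also a small but genuine gap in your overshoot step. You claim that the overshoot $V(\overline d_j)-V(d_j)-D_t\le|\log\rho_{d_j}|$ is $O(\log t)$ uniformly in $j$, citing $E[\rho_0^\kappa\log^+\rho_0]<\infty$. That moment hypothesis controls the \emph{upper} tail of $\rho_0$, whereas the overshoot involves $-\log\rho_{d_j}$, i.e.\ the lower tail of $\rho_0$; assumptions $(a)$--$(b)$ impose no such control, and $d_j$ is moreover a biased random site. The conclusion you actually need, namely $V(d_j)-V(b_{j+1})\ge D_t$, is easily rescued without any overshoot bound: on $A_4(t)$ the last ladder epoch $e_{m_j}\le d_j$ has index $m_j\le\sigma(j)+1+C''\log t$ (ladder epochs are at least one apart and $d_j-\overline d_j\le C''\log t$), and $V(d_j)\ge V(e_{m_j})$ since $d_j$ lies in the excursion interval $[e_{m_j},e_{m_j+1})$; your Cram\'er argument then yields $V(e_{m_j})-V(b_{j+1})\ge(\mu/4)(t^{\kappa/2}-C''\log t-1)$ with the same union bound, and the induction goes through.
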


\begin{proof} By definition, the $*$-valleys
constitute a subsequence of the deep valleys, and $A^*(t)$ occurs as
soon as the valleys $(a_j,b_j,c_j,d_j)$ are disjoint for $1\le j\le
K_t$. Hence, we see that $A_3(t)\cap A_4(t)\subset A^*(t)$. Then, Lemma \ref{model1+2} is a consequence of Lemma \ref{l:preliminaires}.
\end{proof}

\subsection{Directed traps}
Let us first recall that it is well-known (see for example \cite{zeitouni}, formula (2.1.4)) that for
$r<x<s$,
\begin{equation}
    P_{\omega}^{x} \left( \tau(r)< \tau(s) \right) =
    \sum_{j=x}^{s-1} \ee^{V(j)} \left( \, \sum_{j=r}^{s-1} \ee^{V(j)}
    \right) ^{\! \! -1}.
    \label{zeitouni}
\end{equation}
Moreover, we introduce here the inter-arrival times, defined, for any $x,y \in \z,$ by
$$
\tau(x,y):=\inf\{k \ge 0: \, X_{\tau(x)+k}=y\}.
$$
With the two following lemmas, we prove that the particle never backtracks before $a_j$ after reaching the bottom $b_j$ of the $j$-th valley, uniformly in $1\le j \le K_t,$ and that it visits each of them only once.
\medskip
\begin{lemma}
 \label{l:DT}
 Defining $DT(t) := A(t) \cap  \bigcap_{j=1}^{K_t}
    \left\{ \tau(d_j,b_{j+1})< \tau(d_j,\overline{d}_j) \right\},$ we
    have
 \begin{eqnarray*}
\lim_{t\to\infty} \p(DT(t))=1.
 \end{eqnarray*}
\end{lemma}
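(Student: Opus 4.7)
The plan is to reduce to the good event $A(t)$, which carries probability tending to $1$ by Lemma~\ref{l:preliminaires}, and then to control, via a quenched union bound, the probability that at some $j \le K_t$ the particle backtracks from $d_j$ to $\overline{d}_j$ before reaching $b_{j+1}$. On $A(t) \supset A_3(t)\cap A_4(t) \subset A^*(t)$ the deep valleys are disjoint, so in particular $\overline{d}_j < d_j < b_{j+1}$ and the formula~(\ref{zeitouni}) applies. Using the strong Markov property at $\tau(d_j)$, it suffices to show
\begin{equation*}
\sum_{j=1}^{K_t} Q_j(\omega) \longrightarrow 0 \ \hbox{ in probability on } A(t), \qquad
Q_j(\omega) := P_\omega^{d_j}\bigl(\tau(\overline{d}_j) < \tau(b_{j+1})\bigr) = \frac{\sum_{i=d_j}^{b_{j+1}-1} \ee^{V(i)}}{\sum_{i=\overline{d}_j}^{b_{j+1}-1} \ee^{V(i)}}.
\end{equation*}

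I would bound the denominator below crudely by the single term $\ee^{V(\overline{d}_j)}$, and estimate the numerator through a geometric bound on $\max_{d_j \le i \le b_{j+1}-1} V(i)$. Let $e_{m^*}$ be the largest weak descending ladder epoch not exceeding $d_j$. Since $\overline{d}_j = e_{\sigma(j)+1} \le d_j < b_{j+1} = e_{\sigma(j+1)}$, one has $\sigma(j)+1 \le m^* \le \sigma(j+1)-1$, and by definition of $\sigma(j+1)$ every excursion $[e_m,e_{m+1}]$ with $m^* \le m \le \sigma(j+1)-1$ has height at most $h_t$. Combined with the descending property $V(e_{m^*+k}) \le V(e_{m^*})$ for $k \ge 0$ and $V(e_{m^*}) \le V(d_j) \le V(\overline{d}_j) - D_t$ (the second inequality being the definition of $d_j$), this yields
\begin{equation*}
\max_{d_j \le i \le b_{j+1}-1} V(i) \ \le \ V(e_{m^*}) + h_t \ \le \ V(\overline{d}_j) - D_t + h_t.
\end{equation*}

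Plugging this in gives $Q_j(\omega) \le (b_{j+1}-d_j)\,\ee^{-D_t + h_t}$, and summing and bounding $\sum_{j=1}^{K_t}(b_{j+1}-d_j) \le e_{n_t}$ produces
\begin{equation*}
\sum_{j=1}^{K_t} Q_j(\omega) \ \le \ e_{n_t}\,\ee^{-D_t + h_t}.
\end{equation*}
On $A_1(t)$, $e_{n_t} \le C'n_t = C' t^\kappa \log\log t$, and the choice $D_t = (1+\kappa)\log t$, $h_t = \log t - \log\log t$ gives $\ee^{-D_t+h_t} = 1/(t^\kappa \log t)$, so the sum is $O(\log\log t/\log t)$. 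Taking expectation of the indicator of $A(t)\cap DT(t)^c$ against this bound finishes the proof.

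The main technical step is the estimate $\max V \le V(\overline{d}_j) - D_t + h_t$; this is precisely where the definition $D_t = (1+\kappa)\log t$ (oversized with respect to $h_t$) is forced, so that the factor $\ee^{-D_t+h_t}$ beats both the polynomial factor $t^\kappa$ coming from $e_{n_t}$ and any moderate loss from the union bound. Everything else is straightforward accounting on $A(t)$.
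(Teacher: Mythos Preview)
Your argument is essentially the paper's own: apply the strong Markov property at $\tau(d_j)$, use formula~(\ref{zeitouni}), bound the numerator via the fact that every excursion between $d_j$ and $b_{j+1}$ has height below $h_t$ (your ladder-epoch justification makes this step more explicit than the paper does) together with $V(d_j)\le V(\overline d_j)-D_t$, and then invoke the length control from $A_1(t)$. The only cosmetic difference is that you sum the quenched probabilities $Q_j$ directly and bound $\sum_{j\le K_t}(b_{j+1}-d_j)$ by $e_{n_t}$, whereas the paper bounds each term by $o((\log t)^{-(1+\kappa)/2})$ and uses $K_t\le(\log t)^{(1+\kappa)/2}$ from $A_2(t)$.
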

\begin{proof}
Recalling that $K_t\le (\log t)^{{1+\kappa\over 2}}$ with probability tending to one, we have to prove, for $j\ge1,$ that $E[ {\bf 1}_{A(t) \cap \{ j \le K_t\}}  P_{\omega}( \tau(d_j,b_{j+1})> \tau(d_j,\overline{d}_j))]  =o( (\log t)^{-{1+\kappa\over 2}}),$ when $t$ tends to infinity. Therefore, applying the strong Markov property at $\tau(d_j),$ we need to prove that
 \begin{eqnarray}
 \label{eq:DT1}
E[ {\bf 1}_{A(t) \cap \{ j \le K_t\}}  P_{\omega}^{d_j}( \tau(b_{j+1})>\tau(\overline{d}_j))]  =o( (\log t)^{-{1+\kappa\over 2}}), \qquad t \to \infty.
 \end{eqnarray}
 By (\ref{zeitouni}) we get that $P_{\omega}^{d_j} \left( \tau( b_{j+1})> \tau(\overline{d}_j) \right)$ is bounded by $(b_{j+1}-d_j)
\ee^{V(d_j)-V(\overline{d}_j)+h_t}.$ Observe first that $b_{j+1}-d_j \le e_{n_t} \le C' n_t$ on  $A(t) \cap \{ j \le K_t\}.$  Then, recalling that $V(d_j)-V(\overline{d}_j)\le-D_t$ by definition (where $D_t=(1+ \kappa) \log t$) together with $h_t \le \log t$ yields (\ref{eq:DT1}) and concludes the proof of Lemma \ref{l:DT}.
  \end{proof}

\begin{lemma}
 \label{l:DT*}
 Defining $DT^*(t):=\bigcap_{j=1}^{K_t^*}
   \left\{ \tau(b_j^*,d_j^*)< \tau(b_j^*,a^*_{j})
    \right\},$ we have
     \begin{eqnarray*}
\lim_{t\to\infty} \p(DT^*(t))=1.
 \end{eqnarray*}
 \end{lemma}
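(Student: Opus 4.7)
My plan is to adapt the argument given for Lemma \ref{l:DT}, with the roles of the left and right boundaries of a valley interchanged. By Lemmas \ref{l:preliminaires} and \ref{model1+2}, we restrict to $A(t)\cap A^*(t)$, on which $K_t^* = K_t \le (\log t)^{(1+\kappa)/2}$ and $d_j^*-a_j^* \le C''\log t$ for every $j$. Applying the strong Markov property at $\tau(b_j^*)$ and a union bound over $j$, it suffices to show that, for each fixed $j\ge 1$,
$$
E\bigl[{\bf 1}_{A(t)\cap A^*(t)\cap\{j\le K_t^*\}}\,P_{\omega}^{b_j^*}\bigl(\tau(a_j^*)<\tau(d_j^*)\bigr)\bigr]=o\bigl((\log t)^{-(1+\kappa)/2}\bigr).
$$

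Using formula (\ref{zeitouni}) with $r=a_j^*$, $x=b_j^*$, $s=d_j^*$, bounding the numerator by $(d_j^*-b_j^*)$ times its maximum term and the denominator from below by $e^{V(a_j^*)}\ge e^{V(b_j^*)+D_t}$ (from the definition of $a_j^*$), we obtain
$$
P_{\omega}^{b_j^*}\bigl(\tau(a_j^*)<\tau(d_j^*)\bigr)\le (d_j^*-b_j^*)\,e^{\widetilde M_j-D_t}\le C''(\log t)\,e^{\widetilde M_j-D_t},
$$
where $\widetilde M_j:=\max_{b_j^*\le k<d_j^*}(V(k)-V(b_j^*))$. On the good environment event $G_j:=\{\widetilde M_j\le (1+\kappa/2)\log t\}$, this gives $P_{\omega}^{b_j^*}(\tau(a_j^*)<\tau(d_j^*))\le C''(\log t)\,t^{-\kappa/2}=o((\log t)^{-(1+\kappa)/2})$. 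It remains to check that $P(G_j^c\cap\{j\le K_t^*\})$ is polynomially small in $t$, uniformly in $j\le(\log t)^{(1+\kappa)/2}$, so that the complementary contribution is also negligible after the union bound.

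Split $[b_j^*,d_j^*]$ at $\overline d_j^*$. On the ascent $[b_j^*,\overline d_j^*]$, the maximum of $V-V(b_j^*)$ equals the excursion height $H^{(j)}$ (by the identification on $A^*(t)$), and Iglehart's estimate (\ref{iglehartthm}) gives $P(H^{(j)}\ge (1+\kappa/2)\log t \mid H^{(j)}\ge h_t)\le C(\log t)^{-\kappa}t^{-\kappa^2/2}$. On the descent $[\overline d_j^*,d_j^*]$, since $V(\overline d_j^*)\le V(b_j^*)$ it suffices to control $\max_{\overline d_j^*\le k<d_j^*}(V(k)-V(\overline d_j^*))$; as $\overline d_j^*$ is a stopping time of the i.i.d.\ environment and $d_j^*-\overline d_j^*\le C''\log t$ on $A(t)$, the strong Markov property reduces this to the probability that a fresh copy of $V$ rises by $(1+\kappa/2)\log t$ within $C''\log t$ steps, which the termwise Markov inequality $P(V(k)\ge h)\le e^{-\kappa h}$ (using $E[\rho_0^\kappa]=1$) bounds by $C''(\log t)\,t^{-\kappa(1+\kappa/2)}$. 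The main obstacle is precisely this last step: on the descent no \emph{a priori} geometric structure rules out a transient high excursion of the potential, so the argument genuinely relies on the precise exponential decay rate $\kappa$ furnished by $E[\rho_0^\kappa]=1$.
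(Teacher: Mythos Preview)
Your proof is correct and follows the route the paper itself indicates (the paper simply writes that ``the arguments are very similar to the proof of Lemma~\ref{l:DT}'' and omits all details). The one extra ingredient you supply---controlling $\widetilde M_j$ via Iglehart's tail estimate on $H^{(j)}$---is genuinely needed here, since unlike in Lemma~\ref{l:DT} the rise $H^{(j)}$ inside the valley is only bounded below by $h_t$, so a naive bound $e^{\widetilde M_j-D_t}$ is not automatically small. One simplification: on $A(t)\cap A^*(t)\cap\{j\le K_t\}$ the events $A_3(t)$ and $A_4(t)$ force every excursion of $V$ with base in $[\overline d_j^{\,*},d_j^{\,*})$ to have height strictly below $h_t$, hence $\max_{\overline d_j^{\,*}\le k<d_j^{\,*}}(V(k)-V(b_j^{\,*}))<h_t\le H^{(j)}$ deterministically and $\widetilde M_j=H^{(j)}$; your separate probabilistic treatment of the descent is therefore unnecessary, though harmless.
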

\begin{proof} We omit the details here since the arguments are very similar to the proof of Lemma \ref{l:DT}.
 \end{proof}
Finally, we need to know that the time spent between the deep
valleys is small. Let us first recall the following technical result proved in Lemma $7$ of
\cite{enriquez-sabot-zindy-1}.
\begin{lemma}
 \label{l:expoH}
Let  $T^{\uparrow}$ be defined by $T^{\uparrow}(h):=\inf\{x \ge 0: \, V^{\uparrow}(x) \ge h \},$ for any $h\ge 0.$ Then, there exists $C>0$ such that,  for all $h,$
 \begin{eqnarray*}
\e_{\vert 0}\left[\tau(T^{\uparrow}(h)-1)\right] \le  C \ee^h,
 \end{eqnarray*}
where $\e_{\vert 0}$ denotes the expectation under the annealed law $\p_{\vert 0}$ associated with the random walk in random environment  reflected at $0.$
\end{lemma}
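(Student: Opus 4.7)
The plan is to combine the explicit quenched formula for the expected hitting time of the reflected RWRE with the ladder-epoch decomposition of the potential $V$, and to conclude via the Iglehart tail estimate \eqref{iglehartthm} and a Wald-type computation over the i.i.d.\ excursions. By a standard one-step analysis for the walk reflected at $0$, $E_\omega^{\vert 0}[\tau(y)]=\sum_{x=0}^{y-1}f(x)$ with $f(x):=E_\omega^{\vert x}[\tau(x+1)]$ satisfying $f(0)=1$ and, for $x\ge 1$, the recursion $f(x)=1/\omega_x+\rho_x f(x-1)$. Iterating and rewriting $1/\omega_k=1+\rho_k$ eliminates the factors $1/\omega_k$ and produces a universal constant $C_0$ for which
\[
E_\omega^{\vert 0}[\tau(y)] \le C_0 \sum_{0\le k\le x\le y} \ee^{V(x)-V(k)}.
\]
Setting $y=T^{\uparrow}(h)-1$, each summand is bounded by $\ee^h$, but the naive count $y^2\ee^h$ is useless and the structure of $V$ must be exploited.

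To this end I decompose along the weak descending ladder epochs $0=e_0<e_1<\cdots$, with heights $H_j$ and lengths $\Delta_j:=e_{j+1}-e_j$, noting that $T^{\uparrow}(h)-1$ lies inside the $N$-th excursion, $N:=\inf\{j:H_j\ge h\}$. Factoring $\ee^{V(x)-V(k)}=\ee^{V(x)}\ee^{-V(k)}$ and using the ladder property $V(k)\ge V(e_{\nu(x)})$ for every $k\le x$ (with $\nu(x)$ the excursion index of $x$), one checks that $\sum_{k\le x}\ee^{-V(k)}\le Q_{\nu(x)}\,\ee^{-V(e_{\nu(x)})}$, where $Q_j:=\sum_{i=0}^{j}\Delta_i\ee^{-(V(e_i)-V(e_j))}$. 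This leads to
\[
\sum_{0\le k\le x<T^{\uparrow}(h)} \ee^{V(x)-V(k)} \le \sum_{j=0}^{N} Q_j\, W_j^*, \qquad W_j^*:=\sum_{x\in[e_j,e_{j+1})}\ee^{V(x)-V(e_j)}\le \Delta_j\,\ee^{H_j},
\]
with the convention that $W_N^*$ is truncated at $T^{\uparrow}(h)-1$ and hence bounded by $\Delta_N\,\ee^h$.

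I then take annealed expectations. The triples $(\Delta_j,H_j,V(e_j)-V(e_{j+1}))$ are i.i.d.\ across $j$, so $N$ is a geometric stopping time with parameter $p_h:=P(H_0\ge h)\sim C_I\ee^{-\kappa h}$ by \eqref{iglehartthm}. Writing $Q_j=\Delta_j+\tilde Q_{j-1}$ with $\tilde Q_{j-1}$ depending only on excursions $0,\ldots,j-1$, and using the strict descent $E[\ee^{-(V(e_0)-V(e_1))}]<1$ to sum uniformly in $j$ the renewal-type series defining $\tilde Q_{j-1}$, a Wald-type computation reduces the bound to expectations of the form $E[W_0^*\indic_{\{H_0<h\}}]/p_h$ and $E[\Delta_0 W_0^*\indic_{\{H_0<h\}}]/p_h$. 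Classical fluctuation theory gives $E[W_0^*\mid H_0=u]=O(\ee^u)$, and integration against the Iglehart density then produces $O(\ee^{(1-\kappa)h})$; multiplying by $1/p_h=O(\ee^{\kappa h})$ yields the desired $O(\ee^h)$. The contribution of the $N$-th excursion is handled analogously by conditioning on $\{H_N\ge h\}$ and using $W_N^*\le\Delta_N\,\ee^h$.

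The main obstacle is the mixed-moment estimate $E[\Delta_0 W_0^*\indic_{\{H_0<h\}}]$: the length $\Delta_0$ and the height $H_0$ of an excursion are strongly correlated, and $W_0^*$ couples both, so the convenient factorization $E[\Delta_0 f(H_0)]=E[\Delta_0]E[f(H_0)]$ is unavailable. One must therefore establish, via renewal- and fluctuation-theoretic analysis of the descending ladder epochs, sharp control on $\Delta_0$ and on the tail of $W_0^*/\ee^{H_0}$ conditionally on $H_0=u$ --- precisely the sort of estimate that underlies the integrability \eqref{excuinteg} and the parallel computations in \cite{enriquez-sabot-zindy-1}.
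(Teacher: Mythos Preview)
The paper does not prove this lemma at all: it simply recalls it as ``the following technical result proved in Lemma~7 of \cite{enriquez-sabot-zindy-1}''. There is therefore no in-paper argument to compare against; the authors outsource it entirely to the companion paper.

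Your outline follows a natural route --- the quenched hitting-time formula, the ladder-epoch decomposition, and a Wald-type reduction over the i.i.d.\ excursions combined with the Iglehart tail --- and the algebra up to that reduction is set up correctly. But the argument is not closed, and you say so yourself: the crux is the mixed moment $E[\Delta_0 W_0^*\indic_{\{H_0<h\}}]$, together with the asserted but unproven claim that ``classical fluctuation theory gives $E[W_0^*\mid H_0=u]=O(\ee^u)$''. The crude bound $W_0^*\le \Delta_0\ee^{H_0}$ turns the mixed moment into $E[\Delta_0^2\ee^{H_0}\indic_{\{H_0<h\}}]$, and there is no reason for $E[\Delta_0^2\mid H_0=u]$ to stay bounded, so a finer analysis of the excursion is genuinely required. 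You then defer to ``parallel computations in \cite{enriquez-sabot-zindy-1}'', which is precisely where the paper itself sends the reader. In short, your proposal is a plausible reconstruction of the strategy but not a self-contained proof; the gap you flag is real and is exactly the content of the cited lemma. If you want to close it without citing, one workable route is the Girsanov tilt by $\rho_0^\kappa$ used elsewhere in the paper (cf.\ the proof of Lemma~\ref{l:sharpness}): under the tilted law $\tilde P$ the potential has positive drift, the conditioning on $\{H_0\ge u\}$ becomes harmless, and the required moment bounds on $\Delta_0$ and $W_0^*/\ee^{H_0}$ become accessible.
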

Now, we can prove that the time spent by the particle between the $K_t$ first deep valleys is negligible with respect to $t$ with an overwhelming probability when $t$ goes to infinity, which is the statement of the following lemma.
\begin{lemma}
 \label{l:IA}
 Let us introduce the following event
$$IA(t) :=A(t) \cap \left\{  \tau(b_1)+ \sum_{j=1}^{K_t}
\tau(d_j,b_{j+1})<{t\over \log\log t}
  \right\}.
$$
Then, we have
 \begin{eqnarray*}
\lim_{t\to\infty} \p(IA(t))=1.
 \end{eqnarray*}
\end{lemma}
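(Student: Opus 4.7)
The plan is to bound $\p(IA(t)^c)$ by a first-moment (Markov) argument applied to the total ``inter-valley'' time $\Sigma(t):=\tau(b_1)+\sum_{j=1}^{K_t}\tau(d_j,b_{j+1})$. Writing
$$\p(IA(t)^c)\le \p(A(t)^c)+\p(DT(t)^c)+\p\Big(\big\{\Sigma(t)\ge t/\log\log t\big\}\cap A(t)\cap DT(t)\Big),$$
the first two probabilities already tend to zero by Lemmas \ref{l:preliminaires} and \ref{l:DT}, so the task reduces to controlling $\e[\Sigma(t)\indic_{A(t)\cap DT(t)}]$.

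The key per-segment estimate is that, with the convention $d_0:=0$ and $\tau(d_0,b_1):=\tau(b_1)$,
$$\e\big[\indic_{A(t)\cap DT(t)\cap\{j\le K_t\}}\,\tau(d_j,b_{j+1})\big]\le C\,\ee^{h_t},\qquad 0\le j\le K_t.$$
To derive this I would apply the strong Markov property at $\tau(d_j)$ and combine two observations. First, on $DT(t)$ the walk started at $d_j$ reaches $b_{j+1}$ without ever backtracking past $\overline d_j$, so its trajectory coincides with that of the walk reflected at $\overline d_j+1$, and $\tau(d_j,b_{j+1})$ can be replaced by the corresponding reflected hitting time. Second, by the very definition of the deep valleys, the potential $V$ has no excursion of height $\ge h_t$ above its past minimum inside $[d_j,b_{j+1}]$, so after translation by $-d_j$ the point $b_{j+1}$ lies strictly before the first site where the maximal increment of the potential reaches $h_t$. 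Combining these with the monotonicity of the expected hitting time in the starting point (to move the reflection from $\overline d_j+1$ to the starting site), Lemma \ref{l:expoH} supplies the bound $C\ee^{h_t}$. The initial segment $\tau(b_1)$ is handled identically, with reflection at $0$ and the remark that $b_1=e_{\sigma(1)}$ forces $V^{\uparrow}(b_1)<h_t$.

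Summing over $0\le j\le K_t$ and using $K_t\le (\log t)^{(1+\kappa)/2}$ on $A_2(t)\subset A(t)$ together with $\ee^{h_t}=t/\log t$ yields
$$\e\big[\indic_{A(t)\cap DT(t)}\,\Sigma(t)\big]\le C\,(\log t)^{(1+\kappa)/2}\cdot \frac{t}{\log t}=C\,t\,(\log t)^{-(1-\kappa)/2}.$$
Markov's inequality then bounds the third probability above by $C(\log\log t)\,(\log t)^{-(1-\kappa)/2}$, which tends to $0$ since $\kappa<1$, and the lemma follows.

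The main obstacle is the per-segment estimate: the true hitting time could in principle involve leftward excursions of the walk that would spoil any naive bound via Lemma \ref{l:expoH}, which applies only to the reflected walk. The role of the event $DT(t)$ is precisely to rule out such excursions, allowing the replacement by a reflected walk in a piece of environment whose positive excursions are controlled by $h_t$; handling the mismatch between the starting point $d_j$ and the reflection level $\overline d_j+1$ through a monotonicity argument is the last technical step needed to bring the estimate exactly into the form supplied by Lemma \ref{l:expoH}.
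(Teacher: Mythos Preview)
Your proposal is correct and follows essentially the same route as the paper: reduce to a first-moment bound on $\Sigma(t)\indic_{A(t)\cap DT(t)}$, use $DT(t)$ to replace each $\tau(d_j,b_{j+1})$ by the corresponding reflected hitting time, invoke the strong Markov property for the potential together with Lemma~\ref{l:expoH} to get the per-segment bound $C\ee^{h_t}$, and sum over $K_t\le (\log t)^{(1+\kappa)/2}$. One small wording slip: to bring the reflected quantity into the form of Lemma~\ref{l:expoH} you need an \emph{upper} bound, so you should move the \emph{starting point} down to the reflection site (as your phrase ``monotonicity in the starting point'' correctly suggests), not move the reflection up to $d_j$ as the parenthetical seems to say---the latter inequality goes the wrong way.
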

\begin{proof} Recalling Lemma \ref{l:preliminaires}, Lemma \ref{l:DT} and using the Markov inequality, we only need to prove that $\E[{\bf 1}_{A(t) \cap DT(t)} (\tau(b_1)+ \sum_{j=1}^{K_t} \tau(d_j,b_{j+1}))]$ is  $o({t \over \log \log t}),$ when $t$ goes to infinity. For $y<x,$ let us denote by $E_{\omega,\vert y}^{x}$ the expectation associated with the law $P_{\omega,\vert y}^{x}$ of the particle in the environment $\omega,$ started at $x$ and reflected at site $y.$   Then, applying the strong Markov property at times $\tau(d_{K_t}),\dots,\tau(d_1),$ we get that the above expectation is smaller than
  \begin{eqnarray} \label{eq:IA1}
  E[{\bf 1}_{A(t) \cap DT(t)} \tau(b_1)]+  E\Big[{\bf 1}_{A(t) \cap DT(t)} \sum_{j=1}^{K_t} E_{\omega,\vert \overline d_j}^{d_j}[ \tau(b_{j+1})]\Big],
 \end{eqnarray}
 since $(X_{\tau(d_j)+n})_{n \ge 0}$ under $P_{\omega}$ has the same law as  $(X_{n})_{n \ge 0}$ under $P_{\omega,\vert \overline d_j}^{d_j}$ on $A(t) \cap DT(t).$ Concerning the second term of (\ref{eq:IA1}), we apply the strong Markov property for the potential at times $ \overline d_{K_t},\dots, \overline d_1,$ such that we get
  \begin{eqnarray*}
 E\Big[{\bf 1}_{A(t) \cap DT(t)} \sum_{j=1}^{K_t} E_{\omega,\vert \overline d_j}^{d_j}[ \tau(b_{j+1})]\Big] &\le& (\log t)^{{1+\kappa\over 2}}  \e_{\vert 0}\left[\tau(T^{\uparrow}(h_t)-1)\right]
 \\ &\le&  C (\log t)^{{1+\kappa\over 2}}e^{h_t} \le C
t (\log t)^{-{1-\kappa\over 2}},
 \end{eqnarray*}
 the second inequality being a consequence of Lemma \ref{l:expoH}. Now, let us mention that the bound $C e^{h_t}$ can be obtained in a similar way for the first term of (\ref{eq:IA1}), which yields that the expression in (\ref{eq:IA1}) is a $o({t \over \log \log t}),$ when $t$ tends to infinity and concludes the proof of Lemma \ref{l:IA}.
\end{proof}
\subsection{Localization in deep traps}
In a first step, we state a technical result which ensures that the potential does not have excessive fluctuations in a typical box and which will be very useful to control the localization of the particle in a valley.
\begin{lemma}
 \label{l:potfluctu}
If $F_\gamma(t):= \cap_{j=1}^{K_t} \left\{ \max\{V^{\uparrow}(a_j,b_j)\, ; \, -V^{\downarrow}
  (b_j,c_j) \, ; \, V^{\uparrow}(c_j,d_j)\} \le \gamma \log t  \right\},$ then we have, for any $\gamma>0,$
   \begin{eqnarray*}
\lim_{t\to\infty} \p(F_\gamma(t))=1.
 \end{eqnarray*}
\end{lemma}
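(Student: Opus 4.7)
My approach is to condition on the intersection $A(t)\cap A^*(t)$ of the high-probability events from Lemmas \ref{l:preliminaires} and \ref{model1+2}, on which $K_t=K_t^*\le(\log t)^{(1+\kappa)/2}$, the deep valleys coincide with the disjoint (and i.i.d.) $*$-valleys, and $d_j^*-a_j^*\le C''\log t$ for every $j\le K_t^*$. A union bound over $j$ combined with the i.i.d.\ property of the $*$-valleys then reduces the task to showing that, for the first $*$-valley, each of
\[
P(V^{\uparrow}(a_1^*,b_1^*)\ge\gamma\log t),\quad P(-V^{\downarrow}(b_1^*,c_1^*)\ge\gamma\log t),\quad P(V^{\uparrow}(c_1^*,d_1^*)\ge\gamma\log t)
\]
is $o((\log t)^{-(1+\kappa)/2})$; in fact I will obtain bounds of order $t^{-\kappa\gamma}\log t$, which is polynomially small in $t$ and hence beats any negative power of $\log t$.

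On the descending segment $[a_1^*,b_1^*]$ I would decompose $V$ along its weak descending ladder epochs. A short case analysis distinguishing whether $a_1^*\le i\le k\le b_1^*$ lie in the same excursion above the past minimum, together with the monotonicity $V(e_m)\ge V(e_{m'})$ for $m\le m'$, shows that
\[
V^{\uparrow}(a_1^*,b_1^*)\le \max\{H_m:\ e_m\in(a_1^*,b_1^*]\}.
\]
On $A^*(t)$ every such excursion satisfies $H_m<h_t$ (the deep excursion starts precisely at $b_1^*$), and their number is at most $b_1^*-a_1^*\le C''\log t$. Iglehart's tail (\ref{iglehartthm}) and a union bound over these i.i.d.\ heights yield $P(V^{\uparrow}(a_1^*,b_1^*)\ge\gamma\log t)\le (C''\log t)\,C_I\,t^{-\kappa\gamma}$. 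The same argument controls $V^{\uparrow}(\overline{d}_1^*,d_1^*)$, and the elementary decomposition $V^{\uparrow}(c_1^*,d_1^*)\le V^{\uparrow}(c_1^*,\overline{d}_1^*)\vee V^{\uparrow}(\overline{d}_1^*,d_1^*)$ (crossing pairs $i\le\overline{d}_1^*\le k$ are absorbed using $V(i)\ge V(\overline{d}_1^*)$) then leaves only $V^{\uparrow}(c_1^*,\overline{d}_1^*)$ to handle.

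For $-V^{\downarrow}(b_1^*,c_1^*)$ and $V^{\uparrow}(c_1^*,\overline{d}_1^*)$ I would invoke time-reversal: the walk $\hat V(n):=V(c_1^*)-V(c_1^*-n)$ has the same i.i.d.\ increment distribution $\log\rho_0$ as $V$, remains non-negative on $[0,c_1^*-b_1^*]$ (since $V$ attains its minimum $V(b_1^*)$ on $[b_1^*,c_1^*]$), and $-V^{\downarrow}(b_1^*,c_1^*)$ equals the maximum drawdown $\max_{n\le m}(\hat V(n)-\hat V(m))$. Decomposing $\hat V$ along its weak ascending ladder epochs shows that this drawdown equals the largest depth $D_l$ of one of the at most $C''\log t$ i.i.d.\ sub-excursions of $\hat V$ below its running maximum, and by the time-reversal duality between ascending ladder depths and descending ladder heights these depths share the Iglehart-type tail $P(D_1\ge x)\le C\,e^{-\kappa x}$. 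A union bound produces the estimate $O((\log t)\,t^{-\kappa\gamma})$; the treatment of $V^{\uparrow}(c_1^*,\overline{d}_1^*)$ is analogous after reversing $[c_1^*,\overline{d}_1^*]$.

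Summing the three single-valley bounds and multiplying by $K_t^*\le(\log t)^{(1+\kappa)/2}$ yields
\[
P\bigl(F_\gamma(t)^c\cap A(t)\cap A^*(t)\bigr)\le C(\log t)^{(3+\kappa)/2}\,t^{-\kappa\gamma}\xrightarrow[t\to\infty]{}0
\]
for every $\gamma>0$, which proves the lemma. The delicate step is the climbing interval, where $\hat V$ is implicitly conditioned to reach height $\ge h_t$ within $c_1^*-b_1^*$ steps: one must verify that this conditioning does not worsen the tail of the sub-excursion depths. The key observation is that on $\hat V$ the depths between successive weak ascending ladder epochs form an i.i.d.\ sequence whose marginal distribution is the unconditional ladder-depth distribution, to which the Iglehart exponent $\kappa$ applies uniformly regardless of the conditioning on the total ascent.
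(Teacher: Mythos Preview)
The paper's proof is a two-line reduction to Lemma~14 of \cite{enriquez-sabot-zindy-1}: since the critical height $h_t=\log t-\log\log t$ used here exceeds, for every $\varepsilon>0$, the height $(1-\varepsilon)\log t+\frac{1-\varepsilon}{\kappa}\log\log\log t$ used in that earlier paper, the present deep valleys are a subset of the valleys already treated there, and the conclusion follows for every $\gamma>0$ by letting $\varepsilon\downarrow 0$.

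Your approach is genuinely different---you try to give a self-contained argument via Iglehart's tail and time reversal. The reduction to a single $*$-valley by the union bound is fine, and your treatment of the descending pieces $[a_1^*,b_1^*]$ and $[\overline d_1^*,d_1^*]$ via excursion heights is essentially correct (modulo a harmless indexing issue with the excursion containing $a_1^*$). The problem is the climbing segment $[b_1^*,c_1^*]$. Your reversed process $\hat V(n)=V(c_1^*)-V(c_1^*-n)$ does \emph{not} have i.i.d.\ increments: $c_1^*$ is the argmax of the high excursion, not a stopping time, so the sequence $(\log\rho_{c_1^*-n+1})_{n\ge 1}$ is not an i.i.d.\ copy of $(\log\rho_n)_{n\ge 1}$. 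Your attempted fix---that the depths between successive ascending ladder epochs of $\hat V$ are i.i.d.\ with the unconditional ladder-depth law ``regardless of the conditioning on the total ascent''---does not hold: within the excursion one has the constraint $D_l<W(\hat e_{l-1})$ (the walk stays positive), and the conditioning $H\ge h_t$ couples the ladder heights $\Delta_l$ with the depths $D_l$ inside each sub-excursion, so the $D_l$'s are neither independent nor distributed as the unconditional depth.

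The device that actually handles $[b_1^*,c_1^*]$ (and $[c_1^*,\overline d_1^*]$) is the Girsanov change of measure to $\tilde P=\rho_0^\kappa\,P$, under which $V$ has positive drift and the climb to height $h_t$ is typical. This is exactly the mechanism used in the proof of Lemma~\ref{l:sharpness} in the present paper and, presumably, in Lemma~14 of \cite{enriquez-sabot-zindy-1}: one writes
\[
P\bigl(-V^{\downarrow}(0,T_H)\ge\gamma\log t,\;H\ge h_t\bigr)
\le e^{-\kappa h_t}\,\tilde P\bigl(-V^{\downarrow}(0,T_{h_t})\ge\gamma\log t\bigr)
\]
(after restricting to $T_{h_t}$-measurable events) and then bounds the $\tilde P$-probability by a large-deviation estimate for a walk with positive drift, yielding a further factor $e^{-\tilde I\cdot\gamma\log t}$. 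The exponent $\kappa$ you want to extract comes from this tilting, not from a reversal duality. If you wish to keep your argument self-contained, replacing the time-reversal step by this change-of-measure computation will close the gap.
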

\begin{proof} Observe first that  Lemma $14$ in
\cite{enriquez-sabot-zindy-1} implies that, for all $\varepsilon>0,$ the valleys with height larger that $(1-\varepsilon)\log t+{1-\varepsilon\over \kappa} \log\log\log t$ have fluctuations bounded by $\gamma \log t,$ with a probability tending to one, for any $\gamma>\varepsilon/\kappa.$ Now, since $h_t$ is larger than $(1-\varepsilon)\log t+{1-\varepsilon\over \kappa} \log\log\log t$ for any $\varepsilon>0$ (see Remark \ref{r:ht}), the deep valleys considered here are included in the valleys treated by Lemma $14$ in
\cite{enriquez-sabot-zindy-1} for any $\varepsilon>0,$ which concludes the proof of Lemma \ref{l:potfluctu}.
\end{proof}
For each deep valley, let us introduce the position $\overline c_i$ defined by
$$
\overline c_i :=\inf\{n\ge c_i : \; V(n)\le V(c_i)-h_t/3\}.
$$
We first need to know that during its sojourn time inside a deep
valley, the random walk spends almost all its time inside the
interval $(a_i,c_i)$. This is a consequence of the following lemma.
\begin{lemma}\label{l:LT}
Let $LT(t)$ be the event
$$
LT(t):=\bigcap_{i=1}^{K_t}\left\{\tau(\overline c_i, d_i)\le {t\over
\log t}\right\}.
$$
Then,
   \begin{eqnarray*}
\lim_{t\to\infty} \p(LT(t))=1.
 \end{eqnarray*}
\end{lemma}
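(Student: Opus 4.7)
The plan is to estimate $\p(LT(t)^c)$ by a union bound over the indices $i\le K_t$, combining a soft bound on the probability that the walk backtracks from $\overline c_i$ down to $c_i$ with a Markov inequality on the expected hitting time of $d_i$ for a walk reflected at $c_i$. First, I would restrict to the good event $A(t)\cap F_\gamma(t)$ for a fixed small $\gamma\in(0,1/4)$; by Lemmas \ref{l:preliminaires} and \ref{l:potfluctu} this has probability tending to $1$, and on it $K_t\le(\log t)^{(1+\kappa)/2}$, $d_i-a_i\le C''\log t$, and $V^{\uparrow}(c_i,d_i)\le\gamma\log t$. The last bound forces each individual increment $\log\rho_k=V(k)-V(k-1)\le\gamma\log t$ for $k\in(c_i,d_i]$, hence $1/\omega_k=1+\rho_k\le 1+t^\gamma$ in this range.

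By the strong Markov property at the a.s.\ finite time $\tau(\overline c_i)$, it suffices to bound, for each $i$, the quantity $\E[\mathbf{1}_{A(t)\cap F_\gamma(t)\cap\{i\le K_t\}}\,P_\omega^{\overline c_i}(\tau(d_i)>t/\log t)]$, and then sum over $i\le\lfloor(\log t)^{(1+\kappa)/2}\rfloor$. For each $\omega$ I would decompose
\[
P_\omega^{\overline c_i}(\tau(d_i)>t/\log t) \;\le\; P_\omega^{\overline c_i}(\tau(c_i)<\tau(d_i)) + P_{\omega,\vert c_i}^{\overline c_i}(\widetilde\tau(d_i)>t/\log t),
\]
where $P_{\omega,\vert c_i}^{\overline c_i}$ denotes the walk reflected from below at $c_i$ and started at $\overline c_i$. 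The inequality is justified by the fact that on the event the unreflected walk reaches $d_i$ before returning to $c_i$, its trajectory up to $\tau(d_i)$ coincides with that of the reflected walk.

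For the first term, formula (\ref{zeitouni}) gives
\[
P_\omega^{\overline c_i}(\tau(c_i)<\tau(d_i)) \;=\; \frac{\sum_{j=\overline c_i}^{d_i-1}\ee^{V(j)}}{\sum_{j=c_i}^{d_i-1}\ee^{V(j)}}.
\]
On $F_\gamma(t)$ the numerator is at most $C(\log t)\,\ee^{V(c_i)-h_t/3+\gamma\log t}$ (using $V(\overline c_i)\le V(c_i)-h_t/3$ by definition of $\overline c_i$), while the denominator is bounded below by $\ee^{V(c_i)}$ from the $j=c_i$ term; the ratio is therefore $\le C(\log t)^{4/3}t^{-1/3+\gamma}$. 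For the second term, the classical quenched formula for the expected hitting time of a reflected RWRE yields
\[
E_{\omega,\vert c_i}^{\overline c_i}[\widetilde\tau(d_i)] \;\le\; C(d_i-c_i)^2\,\max_{c_i<k\le d_i}(1/\omega_k)\,\ee^{V^{\uparrow}(c_i,d_i)} \;\le\; C(\log t)^2\,t^{2\gamma},
\]
and Markov's inequality gives $P_{\omega,\vert c_i}^{\overline c_i}(\widetilde\tau(d_i)>t/\log t)\le C(\log t)^3\,t^{2\gamma-1}$. Taking for instance $\gamma=1/10$, both contributions are $O(t^{-\varepsilon})$ for some $\varepsilon>0$, and summing the $(\log t)^{(1+\kappa)/2}$ terms still yields $o(1)$, as required.

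The main obstacle is the quenched control of the reflected expected hitting time in the previous display: the standard formula involves $1/\omega_k$, which is not uniformly bounded under assumptions $(a)$-$(b)$. The key observation that rescues the argument is that $F_\gamma(t)$ already bounds each individual increment $V(k)-V(k-1)$ in the post-peak segment $(c_i,d_i]$ by $\gamma\log t$, which automatically yields $1/\omega_k\le 1+t^\gamma$ there, enough to close the estimate.
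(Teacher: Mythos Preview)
Your argument is correct and mirrors the paper's proof: same restriction to the good event, same decomposition into the backtracking probability $P_\omega^{\overline c_i}(\tau(c_i)<\tau(d_i))$ bounded via (\ref{zeitouni}), and the reflected hitting time controlled by Markov's inequality on its quenched expectation. The one difference is in that last step: the paper bounds $E_{\omega,|c_i}^{\overline c_i}[\tau(d_i)]$ by $\sum_{c_i\le u\le v\le d_i}\ee^{V(v)-V(u)}$ (Golosov \cite{golosov}, formula (A1)), which on $A_4(t)\cap F_\gamma(t)$ gives $\le C(\log t)^2\, t^{\gamma}$ directly, with \emph{no} $1/\omega_k$ factor appearing. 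So the ``main obstacle'' you flag is not actually there; your detour through $\max_{k}(1/\omega_k)\le 1+t^{\gamma}$ is valid but unnecessary, and any $\gamma<1/3$ (rather than $\gamma<1/4$) already closes the argument.
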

\noindent This result just means that at the time scale $t$, if the
walk reaches $\overline c_i$, then soon after it exits the deep valley
$(a_i,d_i)$.
\begin{proof}
Recalling Lemma \ref{l:preliminaires} and Lemma \ref{l:potfluctu}, we only have to prove that
$$
\p\left(\tau(\overline{c}_j,d_j) > {t\over
\log t} \, ; \,  A_4(t)  \, ; \, F_\gamma(t) \, ; \,   j \le K_t\right)=o((\log t)^{-{1+\kappa\over 2}}), \qquad t \to \infty,
$$
for any $j \ge 1.$
Now, applying the strong Markov property at $\tau(\overline{c}_j)$, we get that the previous probability is bounded by
$$
 E\left[{\bf 1}_{A_4(t)  \cap F_\gamma(t) \cap  \{ j \le K_t\}} \left(P_{\omega,\vert c_j}^{\overline{c}_j}\left(\tau(d_j) > {t /
\log t} \right) +P_{\omega}^{\overline{c}_j}\left(\tau(c_j)<\tau(d_j)   \right)\right)  \right].$$
Concerning the first term, we use the fact that $E_{\omega, \vert c_j}^{\overline{c}_j}[\tau(d_j)] \le \sum_{c_j \le u \le v \le d_j} \ee^{V(v)-V(u)}$ (see (A1) in \cite{golosov}) and Chebychev inequality, such that we obtain
\begin{equation}
\label{eq:lem7:1}
P_{\omega,\vert c_j}^{\overline{c}_j}\left(\tau(d_j) > {t /
\log t} \right) \le {\log t \over t} \sum_{c_j \le u \le v \le d_j} \ee^{V(v)-V(u)} \le  C'' {(\log t)^2 \over t} \, \ee^{\gamma \log t},
\end{equation}
on $A_4(t)  \cap F_\gamma(t) \cap  \{ j \le K_t\}.$ For the second term, by (\ref{zeitouni}) we obtain that  the probability $P_{\omega}^{\overline{c}_j}\left(\tau(c_j)<\tau(d_j)\right)$ is less than
\begin{equation}
\label{eq:lem7:2}
{ \sum_{k=\overline{c}_j}^{d_j-1} \ee^{V(k)}} \left(\, \sum_{k=c_j}^{d_j-1} \ee^{V(k)} \right)^{-1}
\le (d_j-c_j) \, \ee^{V(\overline{c}_j)+\gamma \log t - V(c_j)} \le C'' (\log t) \, \ee^{\gamma \log t -{h_t \over 3}},
\end{equation}
on $A_4(t)  \cap F_\gamma(t) \cap  \{ j \le K_t\}.$  Then, assembling (\ref{eq:lem7:1}) and (\ref{eq:lem7:2}) yields
\begin{equation*}
\p\left(\tau(\overline{c}_j,d_j) > {t\over
\log t} \, ; \,  A_4(t)  \, ; \, F_\gamma(t) \right) \le C (\log t) \, \ee^{\gamma \log t -{h_t \over 3}},
\end{equation*}
which concludes the proof of Lemma \ref{l:LT} by choosing $\gamma<1/3.$
\end{proof}

\noindent Now, we need to be sure that the bottom of the deep
valleys are sharp. For $\eta >0$, we introduce the following subsets
of the deep valleys
$$
O_i:=[a_i+1, \overline{c}_i-1]\setminus (b_i-\eta\log t+1, b_i+\eta \log t-1), \qquad i \in \n,
$$
and the event
$$
A_5(t,\eta):=\bigcap_{i=1}^{K_t}\left\{\min_{k\in O_i \cap \z}(V(k)-V(b_i))\ge
C'''\eta\log t\right\},
$$
for a constant $C'''$ (small enough and independent of $\eta$) to be defined later. Then, we have the
following result.
\begin{lemma}\label{l:sharpness}
For all $\eta>0$,
$$
\lim_{t\to \infty} P(A_5(t,\eta))=1.
$$
\end{lemma}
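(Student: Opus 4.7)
The plan is to reduce to a single generic $*$-valley via a union bound and the renewal structure, and then to apply a Cramér-type large-deviation estimate to the potential near its minimum.

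By Lemmas \ref{l:preliminaires} and \ref{model1+2}, we may restrict to the event on which $K_t = K_t^* \le (\log t)^{(1+\kappa)/2}$, each valley satisfies $d_j^* - a_j^* \le C'' \log t$, and the deep valleys coincide with the $*$-valleys. If $\eta > C''$, then $O_i^* \cap \mathbb{Z}$ is empty on $A_4(t)$ and $A_5(t,\eta)$ is trivially realized, so we may assume $\eta \le C''$. Since the $*$-valleys consist of i.i.d.\ portions of the potential (up to translation), a union bound reduces the lemma to showing
\begin{equation*}
P\Bigl(\min_{k \in O_1^* \cap \mathbb{Z}}(V(k)-V(b_1^*)) < C''' \eta \log t\Bigr) = o\bigl((\log t)^{-(1+\kappa)/2}\bigr).
\end{equation*}

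Split $O_1^*$ into three pieces: the left $L := [a_1^*+1, b_1^* - \eta \log t]$, the ascending $R_1 := [b_1^* + \eta \log t, c_1^*]$, and the descending $R_2 := (c_1^*, \overline{c}_1^* - 1]$. For $k \in R_2$, the definition of $\overline{c}_1^*$ together with $H^{(1)} \ge h_t$ give $V(k) - V(b_1^*) > V(c_1^*) - V(b_1^*) - h_t/3 \ge 2h_t/3$, which exceeds $C''' \eta \log t$ for $t$ large once $C''' < 2/(3 C'')$; so $R_2$ never causes a failure.

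For $k \in R_1$ (resp.\ $k \in L$), $V(k) - V(b_1^*)$ is a forward (resp.\ time-reversed) sum of at least $\eta \log t$ i.i.d.\ copies of $\log \rho_0$. Conditional on the excursion reaching height $\ge h_t$ (built into the definition of a $*$-valley), the Cramér transform by $\rho^{\kappa}$ gives an equivalent description of the walk under which the increments have positive mean $\mu_+ := E[\rho_0^{\kappa} \log \rho_0] > 0$. A standard exponential Chebyshev bound for the tilted walk yields: for any $C''' < \mu_+$, the probability that the walk dips below $C''' \eta \log t$ at any step at distance $\ge \eta \log t$ from $b_1^*$ is at most $C\, t^{-c \eta}$ for some $c>0$ depending only on $C'''$. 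By the classical cancellation between the tilting factor $e^{\kappa h_t}$ and the normalization $P(H_1 \ge h_t)^{-1} \sim C_I^{-1} e^{\kappa h_t}$ provided by Iglehart's estimate (\ref{iglehartthm}), the same decay governs the conditional probability given $\{H_1 \ge h_t\}$. Choosing $C''' < \min\{\mu_+, 2/(3C'')\}$ small enough, the resulting bound is $o((\log t)^{-(1+\kappa)/2})$ for any fixed $\eta > 0$, proving the lemma.

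The main obstacle is the rigorous implementation of the Cramér tilt under the precise conditioning that defines a $*$-valley (excursion height $\ge h_t$ between two weak descending ladder minima with prescribed depth $D_t$); this technical step draws on the renewal and fluctuation-theoretic machinery already developed in \cite{enriquez-sabot-zindy-1}.
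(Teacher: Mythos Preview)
Your proposal is correct and follows the same overall strategy as the paper: the same three-way decomposition of $O_1$ into left, ascending, and post-$c_1$ pieces; the same direct disposal of $R_2$ via the definition of $\overline c_1$; and the same Girsanov/Iglehart cancellation combined with a Cram\'er-type bound on the ascending piece $R_1$.

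Two minor points of difference are worth noting. First, to control the minimum of $V$ over all of $R_1$ (not just at the single site $\lfloor\eta\log t\rfloor$), the paper does not use a union bound under the tilted law $\tilde P$; instead it applies Cram\'er once to show $T_{\alpha\log t}\le\lfloor\eta\log t\rfloor$ with high $\tilde P$-probability, and then invokes the fluctuation-control event of Lemma~\ref{l:potfluctu} (transported to $\tilde F_\gamma(t)$) to bound all subsequent dips by $\gamma\log t$, so that $\min_{k\ge T_{\alpha\log t}}V(k)\ge(\alpha-\gamma)\log t$. Your implicit union-bound route under $\tilde P$ is equally valid and arguably more economical, since it bypasses Lemma~\ref{l:potfluctu} entirely. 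Second, for the left piece $L$ the paper observes that no Girsanov tilt is needed: the time-reversed increments $-\log\rho_i$ already have positive mean $-E[\log\rho_0]>0$ under $P$, so Cram\'er applies directly. Your text lumps $L$ together with $R_1$ under the conditioning/tilt, which is a small imprecision (the conditioning $\{H\ge h_t\}$ concerns only the excursion to the right of $b_1$) but not a genuine error.
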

\begin{proof} Observe first that if $\eta>C'',$ then the sets $(O_i, 1 \le i \le K_t)$ are empty on $A_4(t).$ Therefore, Lemma \ref{l:sharpness} is a consequence of Lemma \ref{l:preliminaires}.

Now, let us assume $\eta \le C''.$ The definition of $\overline{c}_i$ implies that $\min_{c_i \le k < \overline{c}_i}(V(k)-V(b_i)) \ge \frac{2}{3} h_t.$ Then, choosing $C'''$ such that $C''' C''<2/3$ implies that $C''' \eta \log t < \frac{2}{3} h_t$ for all large $t,$ which yields
\begin{eqnarray}
\label{eq:lem8:1}
P \left(\bigcap_{i=1}^{K_t}\left\{\min_{c_i \le k < \overline{c}_i}(V(k)-V(b_i))\ge
C'''\eta\log t\right\} \right)=1,
\end{eqnarray}
 for all large $t.$ Then, let us introduce the sets
 $$O'_i:=O_i \cap [b_i,c_i], \qquad O''_i:=O_i \cap [a_i,b_i], \qquad i \in \z,
 $$
and the events
\begin{eqnarray*}
A'_5(t,\eta)&:=&\bigcap_{i=1}^{K_t}\left\{\min_{k\in O'_i \cap \z}(V(k)-V(b_i))\ge
C'''\eta\log t\right\},
\\
A''_5(t,\eta)&:=&\bigcap_{i=1}^{K_t}\left\{\min_{k\in O''_i \cap \z}(V(k)-V(b_i))\ge
C'''\eta\log t\right\}.
\end{eqnarray*}
Now, recalling (\ref{eq:lem8:1}), the proof of Lemma \ref{l:sharpness} boils down to showing that
\begin{eqnarray}
\lim_{t\to \infty} P(A'_5(t,\eta))&=&1,
\label{eq:lem8:2}
\\
\lim_{t\to \infty} P(A''_5(t,\eta))&=&1.
\label{eq:lem8:3}
\end{eqnarray}

Let us first prove (\ref{eq:lem8:2}). Recalling Lemma \ref{l:preliminaires} and Lemma \ref{l:potfluctu}, we only need to prove that it is possible to choose $C'''$ small enough such that for some $\gamma>0$
\begin{eqnarray}
\label{eq:lem8:4}
P\left(\min_{k\in O'_1 \cap \z}(V(k)-V(b_1)) <
C'''\eta\log t  \, ; \, F_\gamma(t) \right)=o((\log t)^{-{1+\kappa\over 2}}),
\end{eqnarray}
when $t \to \infty.$ Now recalling assumption $(a)$ of Theorem \ref{t:main} and denoting by $\mu$  the law of $\log \rho_0,$ we can define the law
$\tilde\mu=\rho_0^\kappa \mu,$ and the law $\tilde
P=\tilde\mu^{\otimes \z}$ which is the law of a sequence of i.i.d.
random variables with law $\tilde\mu$. The definition of $\kappa$
implies that $ \int \log \rho \, \tilde \mu(d\rho) >0.$
Now, let us simplify the notation by writing
$$H:=H_0$$
(where $H_0$ is the height of the first excursion defined by $H_0 := \max_{0 \le k \le e_{1}} V(k)$) and define the hitting time of level $h$ for the potential by
$$
T_h:= \min \{ x \ge 0 \, :\, V(x) \ge h \}, \qquad h>0.
$$
Then, introducing $\tilde F_\gamma(t):= \left\{ -V^{\downarrow}(0,T_H) \le \gamma \log t  \right\},$ we can write that the probability term in (\ref{eq:lem8:4}) is smaller than
\begin{eqnarray}
&&P\left(\min_{\lfloor \eta \log t \rfloor \le k \le T_H}V(k) <
C'''\eta \log t \, ; \, \tilde F_\gamma(t) \, | \, H \ge h_t \right)
\nonumber
\\
&\le& C \ee^{\kappa h_t} P\left(\min_{\lfloor \eta \log t \rfloor \le k \le T_H}V(k) <
C'''\eta \log t \, ; \, \tilde F_\gamma(t)  \, ; \, H \ge h_t \right)
\nonumber
\\
&=& C \tilde E\left[\ee^{-\kappa(V(T_H)- h_t)} {\bf 1}_{\{\min_{\lfloor \eta \log t \rfloor \le k \le T_H} V(k) <
C'''\eta \log t  \, ; \, \tilde F_\gamma(t)  \, ; \, H \ge h_t\} } \right]
\nonumber
\\
&\le&C \tilde P\left(\min_{\lfloor \eta \log t \rfloor \le k \le T_H} V(k) <
C'''\eta \log t  \, ; \, \tilde F_\gamma(t)  \, ; \, H \ge h_t\right),
\label{eq:lem8:5}
\end{eqnarray}
the first inequality being a consequence of (\ref{iglehartthm}) and the equality deduced from Girsanov property. Now, let us introduce $\alpha=\alpha(\eta):=c \eta$ with $c$ satisfying $0<c<\min\{\tilde E \left[V(1)\right]; 1/C''\}$ and $\gamma=\gamma(\eta):= c \eta/2.$ Observe that $\alpha \log t< h_t$ for all large $t,$ so that $T_{\alpha \log t}\le T_{h_t} \le T_H <\infty$ on $\{H \ge h_t\}.$ Now since $c < \tilde E \left[V(1)\right],$ we use Chebychev's inequality in the same manner as is done in the proof of the upper bound in
 Cramer's theorem, see \cite{denhollander}, and obtain that $
\tilde P\left(V(\lfloor \eta \log t \rfloor) < \alpha \log t \right) \le C \exp\{-\eta \tilde I (c) \log t \}= o((\log t)^{-{1+\kappa\over 2}}),$ where $\tilde I (\cdot)$ denotes the convex rate function associated with $V$ under $\tilde P.$  This yields $\tilde P\left( T_{\alpha \log t} \le \lfloor \eta \log t \rfloor  \right)=1-o((\log t)^{-{1+\kappa\over 2}}),$ when $t$ tends to infinity. Therefore, we get
\begin{eqnarray}
&&\tilde P\left(\min_{\lfloor \eta \log t \rfloor \le k \le T_H} V(k) <
C'''\eta \log t \, ; \, \tilde F_\gamma(t)  \, ; \, H \ge h_t\right)\nonumber
\\
&\le& \tilde P\left(\min_{T_{\alpha \log t} \le k \le T_H} V(k) <
C'''\eta \log t \, ; \, \tilde F_\gamma(t)  \, ; \, H \ge h_t\right) + o((\log t)^{-{1+\kappa\over 2}}).
\label{eq:lem8:6}
\end{eqnarray}
Furthermore, observe that on $\tilde F_\gamma(t),$  we have $\min_{T_{\alpha \log t} \le k \le T_H} V(k)\ge (\alpha-\gamma) \log t,$ which yields $\min_{T_{\alpha \log t} \le k \le T_H} V(k)\ge C''' \eta \log t,$ if we choose $C'''$ smaller than $c/2.$ Therefore, for $C'''$ small enough (independently of $\eta \le C''$), we get that the probability term in (\ref{eq:lem8:6}) is null for all large $t.$
Now, assembling  (\ref{eq:lem8:5}) and (\ref{eq:lem8:6}) concludes the proof of (\ref{eq:lem8:2}).

The proof of  (\ref{eq:lem8:3}) is similar but easier. Indeed, we do not have to use Girsanov property to study the potential on $[a_i,b_i].$
\end{proof}

\section{Two versions of a Dynkin type renewal result}
\label{s:dynkin}

We define the sequence of random times $(\tau^*_i)_{i\ge 1}$ as
follows: conditioning on the environment $\w$, $(\tau^*_i)_{i\ge 1}$
is defined as an independent sequence of random variables with the
law of  $\tau(d_i^*)$ under $P^{b_i^*}_{\w, |a_i^*}$, where
$\tau(d_i^*)$ denotes the first hitting time of $d_i^*$ and $
P^{b_i^*}_{\w, |a_i^*}$ is the law of the Markov chain in
environment $\w$, starting from $b_i^*$ and reflected at $a_i^*$.
Hence, under the annealed law $\p$, $(\tau_i^*)_{i\ge 1}$ is an
i.i.d. sequence since the $*$-valleys are independent and
identically distributed. The first step in our proof is to derive
the following result.
\begin{proposition}\label{p:dynkin}
Let $\l_t^*$ be the random integer defined by
$$
\l_t^*:=\sup\{n \ge 0 : \; \tau_1^*+\cdots +\tau_n^*\le t\}.
$$
For all $0\le x_1<x_2 \le 1,$ we have
$$
\lim_{t\to\infty} \p(t(1-x_2)\le \tau_1^*+\cdots +\tau_{\l_t^*}^*\le
t(1-x_1)) ={\sin(\kappa\pi)\over \pi} \int_{x_1}^{x_2}
{ (1-x)^{\kappa -1} x^{-\kappa}} \d x.
$$
For all $0\le x_1<x_2$, we have
$$
\lim_{t\to\infty} \p(t(1+x_1)\le \tau_1^*+\cdots
+\tau_{\l_t^*+1}^*\le t(1+x_2)) ={\sin(\kappa\pi)\over \pi}
\int_{x_1}^{x_2} {\d x\over x^{\kappa} (1+x)}.
$$
\end{proposition}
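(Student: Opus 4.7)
The plan is to identify Proposition \ref{p:dynkin} as a Dynkin--Lamperti renewal result for a triangular-array i.i.d. sequence $(\tau_i^*)_{i\ge1}$ whose individual tail, under $\p$, fits into the domain of attraction of a $\kappa$-stable subordinator. Since the $*$-valleys are i.i.d. by construction and $\tau_i^*$ is a quenched functional of the $i$-th $*$-valley only, the $\tau_i^*$ are i.i.d. under $\p$, so all the work is in (i) controlling the common law of $\tau_1^*$ and (ii) extracting the standard arcsine-type statistics from the limit subordinator.

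First step: establish an asymptotic Laplace-transform expansion for $\tau_1^*$. Using the decomposition $\tau_1^* \sim C^{(1)} \ee^{H^{(1)}}$, where $H^{(1)}$ is the height of the first deep valley (conditioned on $H^{(1)} \ge h_t$) and $C^{(1)}$ depends only on the local shape of the valley, I would prove
\begin{equation*}
\lim_{t\to\infty} \frac{1}{q_t}\bigl(1 - E[\ee^{-\lambda \tau_1^*/t}]\bigr) = c\,\lambda^{\kappa}, \qquad \lambda>0,
\end{equation*}
where $q_t = P(H_1 \ge h_t) \sim C_I\ee^{-\kappa h_t}$ and $c$ is an explicit positive constant. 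The exponent $\kappa$ comes from Iglehart's tail (\ref{iglehartthm}), which shows that $H^{(1)} - h_t$ is asymptotically exponential of rate $\kappa$; the power $\lambda^\kappa$ then pops out of $\int_0^\infty \kappa\ee^{-\kappa u}(1-\ee^{-\lambda\ee^u})\d u$ after a change of variable. The shape-dependent factors $C^{(1)}$ contribute only a multiplicative constant in $c$. This step is the heart of the work performed in \cite{enriquez-sabot-zindy-1,enriquez-sabot-zindy-2} and needs only to be ported to the present critical height $h_t = \log t - \log\log t$ (cf.\ Remark \ref{r:ht}); the preliminary estimates of Section \ref{s:preliminaries} are what make the various truncations and reflections harmless.

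Second step: upgrade the Laplace-transform convergence to a functional limit for the partial-sum process. With $a_t := 1/q_t$, Step~1 and standard convergence-of-triangular-arrays theory (e.g.\ the continuity theorem for Laplace exponents of subordinators) yield
\begin{equation*}
\Bigl(\frac{1}{t}\sum_{i=1}^{\lfloor a_t u\rfloor}\tau_i^*\Bigr)_{u\ge 0} \ \Longrightarrow\ (\mathcal{S}_u)_{u\ge 0},
\end{equation*}
in the Skorokhod topology, where $\mathcal{S}$ is a $\kappa$-stable subordinator with Laplace exponent $c\lambda^\kappa$. Setting $L_1 := \inf\{u: \mathcal{S}_u > 1\}$, the continuous-mapping theorem applied to the first-passage functional then gives
\begin{equation*}
\Bigl(\l_t^*/a_t,\ S_{\l_t^*}^*/t,\ S_{\l_t^*+1}^*/t\Bigr) \Longrightarrow (L_1,\ \mathcal{S}_{L_1-},\ \mathcal{S}_{L_1}),
\end{equation*}
where $S_n^* := \tau_1^* + \cdots + \tau_n^*$. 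One has to check that the first-passage functional is almost-surely continuous at $\mathcal{S}$, which is automatic since $\mathcal{S}$ jumps across every level a.s.

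Third step: read off the proposition from the joint law of $(\mathcal{S}_{L_1-}, \mathcal{S}_{L_1})$ for the $\kappa$-stable subordinator. Classical computations (from the Lévy measure $\nu(\d y) \propto y^{-1-\kappa}\d y$ together with the renewal/compensation formula) give
\begin{equation*}
\p(\mathcal{S}_{L_1-} \in \d y) = \frac{\sin(\kappa\pi)}{\pi}\, y^{\kappa-1}(1-y)^{-\kappa}\,\d y,\quad 0<y<1,
\end{equation*}
\begin{equation*}
\p(\mathcal{S}_{L_1} - 1 \in \d x) = \frac{\sin(\kappa\pi)}{\pi}\,\frac{\d x}{x^{\kappa}(1+x)},\quad x>0.
\end{equation*}
The first display of the proposition follows from the first density after the substitution $y = 1-x$, and the second display follows directly from the second density. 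The main obstacle is Step~1: one has to control $1-E[\ee^{-\lambda\tau_1^*/t}]$ with error $o(q_t)$, which requires the sharp valley-crossing estimates developed in \cite{enriquez-sabot-zindy-1, enriquez-sabot-zindy-2}; the present setting differs only through a change in the critical height, and the good-environment lemmas of Section \ref{s:preliminaries} are exactly what is needed to absorb the various boundary effects in this adaptation.
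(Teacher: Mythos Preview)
Your proposal is correct and shares Step~1 with the paper: the Laplace-transform asymptotic for $\tau_1^*$ is exactly the content of the paper's Lemma~\ref{l:laplacetau}, which the paper also imports from \cite{enriquez-sabot-zindy-1,enriquez-sabot-zindy-2} and adapts to the present $h_t$. The divergence is in how one passes from this Laplace estimate to the Dynkin limit law. The paper stays close to Feller's classical renewal-theoretic argument: it writes
\[
\p\bigl(t(1-x_2)\le S_{\l_t^*}^*\le t(1-x_1)\bigr)=\int_{1-x_2}^{1-x_1}\frac{G_t(1-y)}{P(H\ge h_t)}\,U_t\{\d y\},
\]
and then proves two separate convergence lemmas---a Tauberian/monotonicity argument showing $x^\kappa t^\kappa G_t(x)\to 2^\kappa\Gamma(1+\kappa)C_U$ uniformly on compacts, and a Laplace-transform computation showing that the normalized renewal measure $P(H\ge h_t)^{-1}t^{-\kappa}U_t\{\d x\}$ converges vaguely to a multiple of $x^{\kappa-1}\d x$---whose product yields the arcsine density directly. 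Your route instead lifts the Laplace convergence to a functional limit $t^{-1}S^*_{\lfloor a_t\cdot\rfloor}\Rightarrow\mathcal{S}$ in Skorokhod space and reads off the result from the known joint law of $(\mathcal{S}_{L_1-},\mathcal{S}_{L_1})$ via continuous mapping. Your approach uses heavier off-the-shelf machinery (triangular-array functional limits, Skorokhod continuity of the first-passage map) but is more conceptual and delivers the joint limit in one stroke; the paper's approach is more elementary and self-contained, never leaving the level of one-dimensional distributions and Laplace transforms.
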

Observe that the result would exactly be Dynkin's theorem (cf e.g.
Feller, vol II, \cite{feller}, p. 472) if the sequence
$(\tau_i^*)_{i \ge 1}$ was an independent sequence of random variables in the
domain of attraction of a stable law with index $\kappa.$ Here, the
sequence $(\tau_i^*)_{i \ge 1}$ implicitly depends on the time $t$,
since the $*$-valleys are defined from the critical height $h_t$.
We will use the main intermediate result of
\cite{enriquez-sabot-zindy-1} which gives an estimate of the
Laplace transform of $\tau^*_1$ at 0. We deduce from Corollary 2
and Remark 7 of \cite{enriquez-sabot-zindy-1} the following lemma.
\begin{lemma}
\label{l:laplacetau} We have
$$ \E\left[1-\ee^{-\lambda{\tau_1^*\over t}}\right]\sim
2^\kappa{\pi\kappa\over \sin(\pi\kappa)}{C_U\over t^\kappa P(H\ge
h_t)}\lambda^\kappa, \qquad t\to\infty,
$$
for all $\lambda>0$.
\end{lemma}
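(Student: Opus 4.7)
The plan is to obtain this asymptotic by transporting the Laplace transform estimate already proved in Corollary~2 of \cite{enriquez-sabot-zindy-1} to the present choice of critical height. As emphasised in Remark~\ref{r:ht}, the $*$-valley construction of Section~\ref{*-valleys} is formally identical to the one in the earlier paper; only the critical height has been changed from $\frac{1-\varepsilon}{\kappa}\log n$ to $h_t=\log t-\log\log t$. Since the proof of Corollary~2 there uses the critical height only through the Iglehart tail estimate \eqref{iglehartthm} and the qualitative divergence $h_t\to\infty$, the whole derivation goes through verbatim with $h_n$ replaced by $h_t$, the normalising factor $P(H\ge h_n)$ becoming $P(H\ge h_t)$.

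The structure of that earlier argument, which I would follow here, is twofold. First, conditional on the environment of a deep $*$-valley, the reflected hitting time $\tau_1^*$ is approximately exponentially distributed with parameter $1/W_1(\omega)$, where
$$W_1(\omega) = 2\sum_{\substack{a_1^*\le m\le n \\ b_1^*\le n\le d_1^*}}\ee^{V(n)-V(m)}$$
coincides with the Kac-type mean $E_\omega^{b_1^*}[\tau(d_1^*)]$ of the reflected chain. A quantitative coupling between $\tau_1^*$ and an exponential of parameter $1/W_1(\omega)$ is provided in the earlier paper using fluctuation bounds at the bottom of the valley; in our setting the analogous bounds are furnished by Lemma~\ref{l:potfluctu}. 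Second, once one conditions on $H\ge h_t$, the quantity $W_1$ has a Pareto-type tail with index~$\kappa$, whose leading constant — after a suitable convergence of the law of $W_1/\ee^H$ — is identified in Remark~7 of \cite{enriquez-sabot-zindy-1} and corresponds precisely to the constant $C_U$ appearing in the statement of the lemma.

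Combining these two inputs, the annealed Laplace transform can be written as
$$\E\!\left[1-\ee^{-\lambda\tau_1^*/t}\right] = \E\!\left[\frac{\lambda W_1/t}{1+\lambda W_1/t}\right] + o\!\left(\frac{1}{t^\kappa P(H\ge h_t)}\right),$$
the first term being the Laplace transform of an exponential of parameter $1/W_1$, the remainder quantifying the exponential-escape approximation. Using the Pareto tail of $W_1$, the change of variable $y=\lambda x/t$ yields a leading-order expression of the form
$$\frac{2^\kappa\,C_U\,\lambda^\kappa}{t^\kappa P(H\ge h_t)}\cdot\kappa \int_0^\infty \frac{y^{-\kappa}}{1+y}\,dy,$$
and the classical identity $\int_0^\infty y^{-\kappa}/(1+y)\,dy = \pi/\sin(\pi\kappa)$ reproduces the announced constant $2^\kappa \frac{\pi\kappa}{\sin(\pi\kappa)} C_U$. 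The factor $2^\kappa$ is the footprint of the factor $2$ in the definition of $W_1$.

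The main difficulty lies in the quantitative exponential-escape approximation in the first step: one must control the error uniformly over typical environments and show that it is subleading compared with $\lambda^\kappa/(t^\kappa P(H\ge h_t))$. This is precisely what Corollary~2 of \cite{enriquez-sabot-zindy-1} achieves, and its transposition to the present critical height is essentially routine once one verifies that the potential-fluctuation hypotheses used there follow, in our regime, from Lemma~\ref{l:potfluctu}.
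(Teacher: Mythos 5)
Your proposal matches the paper's proof in substance: the lemma is obtained by invoking Corollary~2 and Remark~7 of \cite{enriquez-sabot-zindy-1} with $n=\lfloor t^\kappa\rfloor$ and $h_n=h_t$, and your sketch of the mechanism (quasi-exponential escape time with mean $\approx W_1(\omega)$, then the index-$\kappa$ tail of $W_1$ and the beta-integral identity producing $\pi\kappa/\sin(\pi\kappa)$) is the same two-step argument the authors outline, merely phrased via the exponential approximation rather than the equivalent geometric decomposition $\tau_1^*=F_1+\cdots+F_N+S$. The only cosmetic imprecision is that $W_1$ equals $E_\omega^{b_1^*}[\tau(d_1^*)]$ only up to the additive term $d_1^*-b_1^*$, which is negligible here.
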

\begin{proof}
We apply Corollary 2 of \cite{enriquez-sabot-zindy-1} to $n=\lfloor t^\kappa\rfloor$ and $h_n=h_t=
\log t- \log\log t$ which satisfies the condition of Remark 7 of \cite{enriquez-sabot-zindy-1}. The
constant $C_U$ was made explicit in \cite{enriquez-sabot-zindy-2}
but we will not need this value here.

For the convenience of the reader, we give a brief idea of the
arguments of the proof of this formula. Let us simply write
$(a,b,c,d)$ for $(a_1^*, b_1^*, c_1^*, d_1^*)$. The time it takes
to cross the valley can be decomposed in a geometric number of
unsuccessful attempts and a successful attempt, hence we can write
$$
\tau_1^*=\tau(b,d)=F_1+\cdots +F_N+S,
$$
where $N$ is a geometric random variable with parameter
$$
1-p(\w):=P_{\w}^b(\tau(d)<\tau^+(b))=\w_b {\ee^{V(b)}\over
\sum_{x=b}^{d-1} \ee^{V(x)}},
$$
where $\tau^+(b):=\inf\{n>0 : \, X_n =b\}$. The random variables
$(F_i)_{i\ge1}$ are i.i.d. and distributed as $\tau^+(b)$ under
$P_{\w}^b(\, \cdot \, |\tau^+(b)<\tau(d))$ and $S$ is distributed as
$\tau(d)$ under $P_{\w}^b(\, \cdot \,|\tau(d)<\tau^+(b))$. The first
step is to prove that the successful attempt $S$ can be neglected
(this is done in \cite{enriquez-sabot-zindy-1} using some estimates on h-processes). Thus, we can write
$$
\E\big[\ee^{-\lambda{\tau_1^*\over t}}\big]\sim E\left[{1-p(\w)\over 1-p(\w)
E_{\w}\big[\ee^{-{\lambda\over t} F_1}\big]}\right], \qquad t\to\infty.
$$
The second step is to linearize $E_{\w}[\ee^{-{\lambda\over t}
F_1}]$, i.e. to show that it can be replaced by $(1-{\lambda\over
t} E_{\w}[F_1])$ ( using again
estimates on h-processes). This leads to
$$
\E\big[\ee^{-\lambda{\tau_1^*\over t}}\big]\sim E\left[{1\over
1+{\lambda\over t} {p(\w)\over 1-p(\w)} E_{\w}[F_1]}\right], \qquad t\to\infty.
$$
Then we prove that ${p(\w)\over 1-p(\w)} E_{\w}[F_1]$ is of order
$Z=2 \ee^H M_1 M_2$, where $M_1$ and $M_2$ are defined by $M_1:=\sum_{k=a}^c \ee^{-(V(k)-V(b))}$ and
$M_2:=\sum_{k=b}^d \ee^{V(k)-V(c)}.$ Then, we use the main result
of \cite{enriquez-sabot-zindy-2}, where the tail estimate of $Z$
is obtained (see Theorem 2.2).
\end{proof}
\medskip
{\it Proof of Proposition \ref{p:dynkin}.}
The arguments are essentially the same as in \cite{feller}. Let us
introduce $S_0^*=0$ and $S_n^*:=\sum_{i=1}^{n} \tau_i^*,$ for $n \ge
1.$ Then, the inequality $t(1-x_2)\le \tau_1^*+\cdots
+\tau_{\l_t^*}\le t(1-x_1)$ occurs iff $S_n^*=t y$ and  $\tau_{n+1}^*
> t(1-y)$ for some combination $n,$ $y$ such that $1-x_2<y<1-x_1.$
Summing over all $n$ and possible $y$ we get
\begin{eqnarray}
\label{agedevie}
 \p(t(1-x_2)\le S_{\l_t^*}^* \le t(1-x_1))=
\int_{1-x_2}^{1-x_1} {G_t(1-y) \over P(H \ge h_t)}U_t\{ \mathrm{d}
y\},
\end{eqnarray}
where $G_t(x):= P(H \ge h_t) \p(t^{-1}\tau_{1}^* \ge x),$ and
$U_t\{ \mathrm{d} x\}$ denotes the measure associated with $U_t(x):=\sum_{n
\ge 0} \p(t^{-1}S_{n}^* \le x).$ We introduce the measure
$\mathrm{d}H_t(u)$ such that $\int_{x}^{\infty}
\mathrm{d}H_t(u)=G_t(x),$ for all $x \ge 0.$

\begin{lemma}\label{l:convponctG}
For any $x > 0,$ we have
\begin{eqnarray}
\label{eq:convG}
\lim_{t \to \infty} x^\kappa t^\kappa \, G_t(x)=2^\kappa \Gamma(1+\kappa) C_U.
\end{eqnarray}
Moreover, the convergence is uniform on any compact set.
\end{lemma}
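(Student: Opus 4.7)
The plan is to recast Lemma~\ref{l:laplacetau} as Laplace-transform convergence for the non-increasing functions $\widetilde G_t(x):=t^\kappa G_t(x)$, and then transfer this to pointwise convergence by exploiting the monotonicity of $\widetilde G_t$. First, since
$$P(H\ge h_t)\,\E\bigl[1-\ee^{-\lambda\tau_1^*/t}\bigr]\;=\;\int_0^\infty\lambda\,\ee^{-\lambda x}\,G_t(x)\d x,$$
multiplying by $t^\kappa$ and applying Lemma~\ref{l:laplacetau} gives, for every $\lambda>0$,
$$\int_0^\infty\ee^{-\lambda x}\widetilde G_t(x)\d x\;\longrightarrow\;\frac{2^\kappa\pi\kappa C_U}{\sin(\pi\kappa)}\,\lambda^{\kappa-1},\qquad t\to\infty.$$
The reflection formula $\Gamma(\kappa)\Gamma(1-\kappa)=\pi/\sin(\pi\kappa)$ combined with $\int_0^\infty\ee^{-\lambda x}x^{-\kappa}\d x=\Gamma(1-\kappa)\lambda^{\kappa-1}$ identifies this limit as the Laplace transform of the continuous non-increasing function $\widetilde G(x):=2^\kappa\Gamma(1+\kappa)C_U\,x^{-\kappa}$, so that \eqref{eq:convG} reduces to the pointwise statement $\widetilde G_t(x)\to\widetilde G(x)$ for every $x>0$.

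Next I would pass from Laplace-transform convergence to pointwise convergence. The finite measures $\ee^{-x}\widetilde G_t(x)\d x$ on $[0,\infty)$ have total masses converging (the case $\lambda=1$ above) and Laplace transforms converging at every $\lambda\ge 0$ to those of $\ee^{-x}\widetilde G(x)\d x$; the standard continuity theorem for Laplace transforms of finite measures (see e.g.\ Feller vol.~II) therefore yields weak convergence, so
$\int_a^b\widetilde G_t(x)\d x\to\int_a^b\widetilde G(x)\d x$ for all $0<a<b$. Monotonicity of $\widetilde G_t$ then provides, for each $x>0$ and small $\varepsilon>0$, the sandwich
$$\frac{1}{\varepsilon}\int_x^{x+\varepsilon}\widetilde G_t(y)\d y\;\le\;\widetilde G_t(x)\;\le\;\frac{1}{\varepsilon}\int_{x-\varepsilon}^{x}\widetilde G_t(y)\d y,$$
and letting $t\to\infty$ and then $\varepsilon\downarrow 0$, the continuity of $\widetilde G$ pinches both bounds to $\widetilde G(x)$, giving \eqref{eq:convG}. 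Uniformity on compact subsets of $(0,\infty)$ then comes for free from the classical fact that pointwise convergence of monotone functions to a continuous monotone limit is automatically uniform on compacts.

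The hardest point will be the second step: because $\widetilde G(x)\d x$ has infinite mass near $0$, the classical continuity theorem for sub-probability measures cannot be invoked directly on $\widetilde G_t(x)\d x$. The reweighting by $\ee^{-x}$ is what rescues the argument, by producing a finite limit measure to which the theorem applies, and the monotonicity of $\widetilde G_t$ is what allows one to recover pointwise values from integrals over shrinking intervals. Everything else is routine bookkeeping of constants via the reflection formula.
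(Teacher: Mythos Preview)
Your proof is correct and follows essentially the same route as the paper: both convert Lemma~\ref{l:laplacetau} into Laplace-transform convergence of the measures $t^\kappa G_t(x)\,\d x$, pass to convergence of integrals over intervals, use the monotonicity of $G_t$ to sandwich $\widetilde G_t(x)$ between averages over adjacent intervals, and finally obtain uniformity on compacts from pointwise convergence of monotone functions to a continuous limit. The only cosmetic differences are that the paper integrates over $[0,x]$ (using that $u^{-\kappa}$ is integrable near $0$) and deduces pointwise convergence via a ratio trick, whereas you reweight by $\ee^{-x}$ to get genuine weak convergence of finite measures and then apply a more direct sandwich on $[x-\varepsilon,x]$ and $[x,x+\varepsilon]$; your packaging of the measure-convergence step is arguably cleaner, but the underlying ideas are identical.
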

\begin{proof} In a first step, observe that $\E[1-\ee^{-\lambda {\tau_1^* \over
t}}]=P(H \ge h_t)^{-1}\int_{0}^{\infty}(1-\ee^{-\lambda u}) \d
H_t(u).$ Recalling Lemma \ref{l:laplacetau}, we obtain
$$
\lim_{t \to \infty} t^\kappa  \int_{0}^{\infty}(1-\ee^{-\lambda u}) \d
H_t(u)=2^\kappa \Gamma(1+\kappa) C_U \Gamma(1-\kappa)
\lambda^\kappa.
$$
Since $\Gamma(1-\kappa) \lambda^{\kappa}=\lambda
\int_{0}^{\infty}\ee^{-\lambda u} u^{-\kappa} \d
 u,$ this implies
\begin{eqnarray}
\label{eq:dynkin1} \lim_{t \to \infty} t^\kappa
\int_{0}^{\infty}(1-\ee^{-\lambda u}) \d H_t(u)= 2^\kappa
\Gamma(1+\kappa) C_U \lambda \int_{0}^{\infty}\ee^{-\lambda u}
u^{-\kappa} \d
 u.
\end{eqnarray}
On the other hand, integrating by parts, we get, for any $t
\ge 0,$
\begin{eqnarray}
\label{eq:dynkin2} \int_{0}^{\infty}(1-\ee^{-\lambda u}) \d H_t(u)=
\lambda \int_{0}^{\infty}\ee^{-\lambda u} G_t(u) \d u.
\end{eqnarray}
Combining (\ref{eq:dynkin1}) and (\ref{eq:dynkin2}) implies that the
measure $t^\kappa G_t(u) \d u$ tends to the measure with density $2^\kappa
\Gamma(1+\kappa) C_U u^{-\kappa}.$ Therefore, we have for all $x \ge
0,$
\begin{eqnarray}
\label{eq:dynkin3} \lim_{t \to \infty} t^\kappa \int_{0}^{x} G_t(u) \d u =
2^\kappa \Gamma(1+\kappa) C_U {x^{1-\kappa} \over 1-\kappa},
\end{eqnarray}
which yields
\begin{eqnarray}
\label{eq:dynkin4} \lim_{\varepsilon \to 0} \ \lim_{t \to \infty} {
\int_{x}^{(1+ \varepsilon) x} G_t(u) \d u  \over \varepsilon
\int_{0}^{x} G_t(u) \d u } = 1-\kappa .
\end{eqnarray}
Moreover, observe that the monotonicity of $G_t(\cdot)$ implies
\begin{eqnarray}
\label{eq:dynkin5} { x G_t((1+ \varepsilon) x)   \over \int_{0}^{x}
G_t(u) \d u }  \le { \int_{x}^{(1+ \varepsilon) x} G_t(u) \d u \over
\varepsilon \int_{0}^{x} G_t(u) \d u } \le {
 x G_t(x)   \over\int_{0}^{x} G_t(u) \d u }.
\end{eqnarray}
Now, combining (\ref{eq:dynkin4}) and (\ref{eq:dynkin5}), we obtain
$$
\liminf_{t \to \infty} {
 x G_t(x)   \over\int_{0}^{x} G_t(u) \d u} \ge 1-\kappa.
$$
Recalling (\ref{eq:dynkin3}), this yields
\begin{eqnarray}
\label{eq:dynkin6} \liminf_{t \to \infty}
 x^{\kappa} t^\kappa G_t(x) \ge 2^\kappa \Gamma(1+\kappa) C_U.
\end{eqnarray}
Similarly, we obtain, for any $\varepsilon>0,$
\begin{eqnarray}
\label{eq:dynkin7} \limsup_{t \to \infty}
 x^{\kappa} t^\kappa G_t((1+\varepsilon)x) \le 2^\kappa \Gamma(1+\kappa) C_U.
\end{eqnarray}
Assembling (\ref{eq:dynkin6}) and (\ref{eq:dynkin7}) and letting $\varepsilon \to 0$ conclude the
proof of (\ref{eq:convG}).

Furthermore, observe that the uniform convergence on any compact set
is a consequence of the monotonicity of $x \mapsto G_t(x),$ the continuity
of the limit and Dini's theorem.
\end{proof}

\begin{lemma}\label{l:convU}
The measure ${P(H \ge h_t)^{-1} \over t^\kappa}U_t\{ \mathrm{d} x\}$ converges
vaguely to the measure  $${1\over \Gamma(\kappa) \Gamma(1+\kappa)
\Gamma(1-\kappa) 2^\kappa C_U} x^{\kappa-1} \mathrm{d} x.$$
\end{lemma}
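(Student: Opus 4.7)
\textbf{Proof plan for Lemma \ref{l:convU}.} The plan is to establish the vague convergence by working with Laplace transforms and invoking the standard continuity theorem for Laplace transforms of positive measures on $[0,\infty)$ (Feller, vol.~II, XIII.1). All the substantial analytic input is already encoded in Lemma \ref{l:laplacetau}; what remains is essentially a direct computation.

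First, since $U_t\{\d x\}=\sum_{n\ge 0}\p(t^{-1}S_n^*\in \d x)$ is the renewal measure of the i.i.d.\ sequence $(\tau_i^*/t)_{i\ge 1}$ under $\p$, I would compute its Laplace transform as a geometric sum:
$$
\int_0^\infty \ee^{-\lambda x}\,U_t\{\d x\} \;=\; \sum_{n\ge 0}\left(\E\big[\ee^{-\lambda\tau_1^*/t}\big]\right)^n \;=\; \frac{1}{\E\big[1-\ee^{-\lambda\tau_1^*/t}\big]},
$$
for every $\lambda>0$, using that $\tau_1^*>0$ almost surely so that $\E[\ee^{-\lambda\tau_1^*/t}]<1$.

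Second, I would apply Lemma \ref{l:laplacetau} to obtain, for every fixed $\lambda>0$, the pointwise convergence
$$
\int_0^\infty \ee^{-\lambda x}\,\frac{P(H\ge h_t)^{-1}}{t^\kappa}\,U_t\{\d x\} \;=\; \frac{1}{t^\kappa P(H\ge h_t)\,\E\big[1-\ee^{-\lambda\tau_1^*/t}\big]} \;\longrightarrow\; \frac{\sin(\pi\kappa)}{2^\kappa\pi\kappa\,C_U}\,\lambda^{-\kappa},
$$
as $t\to\infty$. To identify this limit as a Laplace transform, I would combine the classical formula $\int_0^\infty \ee^{-\lambda x}x^{\kappa-1}\,\d x=\Gamma(\kappa)\lambda^{-\kappa}$ with the identity $\Gamma(1+\kappa)\Gamma(1-\kappa)=\kappa\Gamma(\kappa)\Gamma(1-\kappa)=\kappa\pi/\sin(\pi\kappa)$ coming from Euler's reflection formula; a one-line check gives
$$
\frac{\sin(\pi\kappa)}{2^\kappa\pi\kappa\,C_U}\,\lambda^{-\kappa} \;=\; \int_0^\infty \ee^{-\lambda x}\,\frac{x^{\kappa-1}}{\Gamma(\kappa)\Gamma(1+\kappa)\Gamma(1-\kappa)\,2^\kappa C_U}\,\d x,
$$
so the limit is precisely the Laplace transform of the announced density.

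Finally, the continuity theorem for Laplace transforms of positive measures on $[0,\infty)$ then upgrades this pointwise convergence of Laplace transforms (to a limit that is itself the Laplace transform of a locally integrable measure) into vague convergence of the rescaled renewal measures, which is exactly the statement of Lemma \ref{l:convU}. The only real obstacle lies in the arithmetic with the Gamma-function constants to recognize the right-hand side; the probabilistic content is entirely contained in Lemma \ref{l:laplacetau}, so no additional tail estimate on $\tau_1^*$ is required at this stage.
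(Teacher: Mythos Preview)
Your proposal is correct and follows essentially the same approach as the paper: compute the Laplace transform of the renewal measure as a geometric series, apply Lemma~\ref{l:laplacetau} to obtain pointwise convergence of the rescaled Laplace transforms to $\lambda^{-\kappa}/(\Gamma(1+\kappa)\Gamma(1-\kappa)2^\kappa C_U)$, and then invoke the continuity theorem for Laplace transforms together with $\Gamma(\kappa)\lambda^{-\kappa}=\int_0^\infty \ee^{-\lambda u}u^{\kappa-1}\d u$ to deduce the vague convergence. Your version is slightly more explicit about the Gamma-function identities, but the argument is otherwise identical.
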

\begin{proof} Observe first that the Laplace transform $\widehat{U}_t(\lambda):=\int_{0}^{\infty}
 \ee^{-\lambda u} U_t\{ \mathrm{d} u\}$ satisfies $\widehat{U}_t(\lambda)=\sum_{n \ge 0} \e[\ee^{-\lambda {S_n^* \over t}}]=
 (1-\e[\ee^{-\lambda {\tau_1^* \over t}}])^{-1}.$ Therefore, Lemma \ref{l:laplacetau}
 yields
 $$
\lim_{t \to \infty} { P(H \ge h_t)^{-1} \over t^\kappa} \, \widehat{U}_t(\lambda)=
{\lambda^{-\kappa}\over \Gamma(1+\kappa) \Gamma(1-\kappa) 2^\kappa
C_U}.
 $$
 Furthermore, since $\Gamma(\kappa) \lambda^{-\kappa}=\int_{0}^{\infty}\ee^{-\lambda u} u^{\kappa-1} \d
 u,$ we deduce the vague convergence of the measure from the pointwise convergence of the Laplace transforms.
\end{proof}

Now, recalling (\ref{agedevie}), we observe that Lemma
\ref{l:convponctG} together with Lemma \ref{l:convU} imply
\begin{eqnarray*}
\lim_{t \to \infty} \p(t(1-x_2)\le S_{\l_t^*}^* \le t(1-x_1))&=& {1
\over \Gamma(\kappa)\Gamma(1-\kappa)} \int_{1-x_2}^{1-x_1}
(1-y)^{-\kappa} y^{\kappa-1} \d y,
\\
&=& {\sin(\kappa\pi)\over \pi} \int_{x_1}^{x_2}  {(1-y)^{\kappa -1} y^{-\kappa}} \d y.
\end{eqnarray*}
This concludes the proof of the first part of Proposition
\ref{p:dynkin}. The second part of Proposition \ref{p:dynkin} is
obtained using similar arguments.

Recall Lemma \ref{l:IA} which tells that the inter-arrival times are negligible. Now, we will prove that the results of Proposition \ref{p:dynkin} are still true if we consider, in addition, these inter-arrival times.
Let $\delta_1:=\tau(b_1),$ $\tau_1:=\tau(b_1, d_1)$ and
$$
\delta_k:=\tau(d_{k-1}, b_{k}), \qquad     \tau_k:=\tau(b_k, d_k), \qquad k \ge 2.
$$
Moreover, we set
$$
T_k:=\delta_1+\tau_1+\cdots+ \tau_{k-1} +\delta_{k},  \qquad
k\ge 1,
$$
the entering time in the $k$-th deep valley.

\begin{proposition}\label{p:dynkinbis}
Recall $\l_t=\sup \{n \ge 0: \; \tau(b_n) \le t\}.$ Then, we have
 \begin{eqnarray*}
    \p(T_{\l_t}\le t < T_{\l_t}+\tau_{\l_t}) \to 1, \qquad t \to \infty.
 \end{eqnarray*}
For all $0\le x_1< x_2 \le 1$, we have
 \begin{eqnarray*}
\lim_{t\to\infty} \p(t(1-x_2)\le T_{\l_t} \le t(1-x_1))
={\sin(\kappa\pi)\over \pi} \int_{x_1}^{x_2}  {(1-x)^{\kappa -1} x^{-\kappa}} \d x.
 \end{eqnarray*}
For all $0\le x_1<x_2$, we have
 \begin{eqnarray*}
\lim_{t\to\infty} \p(t(1+x_1)\le T_{\l_t+1}\le t(1+x_2)) =
{\sin(\kappa\pi)\over \pi} \int_{x_1}^{x_2}  {\d x\over x^{\kappa}
(1+x)}.
 \end{eqnarray*}
\end{proposition}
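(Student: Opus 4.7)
The plan is to reduce Proposition~\ref{p:dynkinbis} to Proposition~\ref{p:dynkin} by showing that, on a high-probability good event, the quantities $T_{\l_t}$ and $T_{\l_t+1}$ differ from the corresponding $*$-valley renewal sums only by negligible inter-arrival corrections. First I would introduce the good event
$\mathcal G(t):=A(t)\cap A^*(t)\cap DT(t)\cap DT^*(t)\cap IA(t),$
which satisfies $\p(\mathcal G(t))\to 1$ by Lemmas~\ref{l:preliminaires}, \ref{model1+2}, \ref{l:DT}, \ref{l:DT*} and \ref{l:IA}. On $\mathcal G(t)$: (i)~the deep valleys coincide with the $*$-valleys; (ii)~the walk visits $b_1<d_1<b_2<d_2<\cdots$ in the prescribed order without backtracking, so $\tau(b_n)=T_n$; (iii)~by the strong Markov property, the genuine sojourn times $\tau_k:=\tau(b_k,d_k)$ have, conditionally on $\omega$, the same joint law as $(\tau_k^*)_{k\le K_t^*}$, and hence the two sequences can be coupled to agree on $\mathcal G(t)$; (iv)~the accumulated inter-arrival time $\Delta_n:=\delta_1+\cdots+\delta_n$ satisfies $\Delta_{K_t+1}\le t/\log\log t$.

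The second step is to relate the index $\l_t$ to the walk-side renewal index $M_t:=\sup\{n\ge 0:S_n\le t\}$, where $S_n:=\tau_1+\cdots+\tau_n$. Writing $T_n=S_{n-1}+\Delta_n$ and using $0\le\Delta_n\le\Delta_{K_t+1}\le t/\log\log t$, one gets $M_{t-\Delta_{K_t+1}}\le \l_t-1\le M_t$ on $\mathcal G(t)$. After the coupling, Proposition~\ref{p:dynkin} applies to $M_t$: the variable $S_{M_t}/t$ converges in law to one with density $\tfrac{\sin(\kappa\pi)}{\pi}\,y^{\kappa-1}(1-y)^{-\kappa}$ on $(0,1)$. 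The mass $\tfrac{\sin(\kappa\pi)}{\pi}\int_{1-\eps}^1 y^{\kappa-1}(1-y)^{-\kappa}\d y$ is bounded by $C\eps^{1-\kappa}$ and tends to $0$ as $\eps\downarrow 0$, so a routine uniformity-in-$\eps$ argument yields $\p(S_{M_t}>t(1-1/\log\log t))\to 0$. This rules out a renewal in the window $(t-\Delta_{K_t+1},t]$ with probability tending to $1$, and hence forces $\l_t-1=M_t$ on a set of probability tending to $1$.

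With this identification in hand the three conclusions follow quickly. For the first, $T_{\l_t}\le t$ holds by definition of $\l_t$, while on $\{\l_t=M_t+1\}$ one has $T_{\l_t}+\tau_{\l_t}=S_{\l_t}+\Delta_{\l_t}\ge S_{M_t+1}>t$ by definition of $M_t$. For the second, $T_{\l_t}=S_{M_t}+\Delta_{\l_t}$ with $|\Delta_{\l_t}|\le t/\log\log t=o(t)$, so $T_{\l_t}/t$ and $S_{M_t}/t$ share the same limit in law; this limit, rewritten via the change of variable $y=1-x$, is precisely the one given by the first part of Proposition~\ref{p:dynkin}. For the third, $T_{\l_t+1}=S_{M_t+1}+\Delta_{\l_t+1}=S_{M_t+1}+o(t)$, and the second part of Proposition~\ref{p:dynkin} identifies the limit law of $S_{M_t+1}/t$, which yields the claim.

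The main obstacle is the delicate comparison between the genuine sojourn times $(\tau_k)$ and the i.i.d.\ resampled times $(\tau_k^*)$: the joint-law identity holds only after restricting to $\mathcal G(t)$, so every passage from a statement on $(\tau_k^*)$ (to which Proposition~\ref{p:dynkin} directly applies) to one on $(\tau_k)$ must be justified by this coupling together with $\p(\mathcal G(t))\to 1$. A secondary subtlety is that the window $(t-t/\log\log t,\,t]$ in which $\l_t-1$ could differ from $M_t$ has width $o(t)$, so one must verify that the convergence in Proposition~\ref{p:dynkin} is strong enough to exclude renewals there; this is ensured by the monotonicity of the limiting distribution function together with the integrable singularity of its density at $y=1$.
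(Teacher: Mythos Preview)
Your proposal is correct and follows essentially the same route as the paper: both introduce the auxiliary index $M_t=\tilde\l_t:=\sup\{n:\tau_1+\cdots+\tau_n\le t\}$, transfer Proposition~\ref{p:dynkin} from $(\tau_i^*)$ to $(\tau_i)$ via the coupling on the good event $A(t)\cap DT^*(t)$, and then use the inter-arrival bound of Lemma~\ref{l:IA} together with the vanishing limiting mass near $y=1$ to force $\l_t-1=M_t$ with probability tending to~$1$. Your write-up is in fact more explicit than the paper's (which concludes with ``by the same type of arguments''); the sandwich $M_{t-\Delta_{K_t+1}}\le \l_t-1\le M_t$ and the $\eps$-window argument you sketch are exactly what is needed to fill in those details.
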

\begin{proof} On the event $A(t)\cap DT^* (t)$, we know that the random
times $(\tau_i)_{1\le i\le K^*_t}$ have the same law as the random
times $(\tau^*_i)_{1\le i\le K^*_t}$ defined in Section
\ref{s:dynkin}. If we define $\tilde \l_t :=\sup\{n\ge 0: \;
\tau_1+\cdots + \tau_n\le t\}$, then, using Proposition
\ref{p:dynkin} and Lemma \ref{l:DT}, we get that the result of
Proposition \ref{p:dynkin} is true with $\tau$ and $\tilde \l_t$ in
place of $\tau^*$ and $\l^*_t$. Now, using Lemma \ref{l:IA} we see
that
\begin{eqnarray*}
&&\liminf_{t\to \infty} \p ( \tilde \l_t=\l_t -1\, ;\, T_{\l_t}\le t <
T_{\l_t}+\tau_{\l_t} )
\\
&\ge& \liminf_{t\to\infty} \p( IA(t)\, ;\, \vert t- (\tau_1+\cdots
+\tau_{\tilde \l_t})\vert \ge \xi t ),
\end{eqnarray*}
for all $\xi>0.$ Thus, using Proposition \ref{p:dynkin} (for
$\tilde \l_t$ and $\tau_{i}$) and letting $\xi$ tends to $0,$ we
get that
$$
\lim_{t\to \infty} \p (\tilde \l_t=\l_t -1\, ;\, T_{\l_t}\le t <
T_{\l_t}+\tau_{\l_t})=1.
$$
We conclude the proof by the same type of arguments.
\end{proof}

\section{Proof of part $(i)$ of Theorem \ref{t:main+}: a localization result}
\label{s:localresult}
We follow the strategy developed by Sinai for the recurrent case.
For each valley we denote by $\pi_{i}$ the invariant measure of the
random walk on $[a_i, \overline c_i]$ in environment $\w$, reflected
at $a_i$ and $\overline c_i$ and normalized so that $\pi_i(b_i)=1$.
Clearly, $\pi_i$ is the reversible measure given, for $k\in [b_i+1,
\overline c_i -1],$ by
\begin{eqnarray*}
\pi_i(k)&=&{\w_{b_i}\over 1-\w_{b_i+1}}\cdots {\w_{k-1}\over 1-\w_k}
\\
&=& \w_{b_i} \rho^{-1}_{b_i+1}\cdots \rho^{-1}_{k-1}(\rho_k^{-1}+1)
\\
&\le& \ee^{-(V(k)-V(b_i))}+ \ee^{-(V(k-1)-V(b_i))}.
\end{eqnarray*}
Similarly, $\pi_i(k)\le \ee^{-(V(k)-V(b_i))}+\ee^{-(V(k+1)-V(b_i))}$ for
$k\in [a_i+1, b_i-1]$. Since the walk is reflected at $a_i$ and
$\overline c_i,$ we have $\pi_i(a_i)= \ee^{-(V(a_i+1)-V(b_i))}$ and
$\pi_i(\overline c_i)= \ee^{-(V(\overline c_i -1)-V(b_i))}$. Hence on
the event $A_5(t,\eta)$ we have
$$ \sup \{\pi_i(k) \, ; \,  k \in [a_i, \overline c_i]\setminus (b_i-\eta \log t, b_i+\eta
\log t) \} \le C \ee^{-C'''\eta \log t}=C t^{-C'''\eta}.
$$
Moreover, since $\pi_i$ is an invariant measure and since
$\pi_i(b_i)=1$, we have, for all $k\ge 0,$
$$
P_{\w, |a_i, \overline c_i| }^{b_i}(X_k=x)\le \pi_i(x).
$$
Hence, on the event $A(t)\cap A_5(t,\eta)$  we have, for all $k\ge 0,$
\begin{eqnarray}\label{pii}
P_{\w, |a_i, \overline c_i| }^{b_i}(\vert X_k-b_i\vert > \eta \log
t)\le C ( \log t) t^{-C''' \eta}.
\end{eqnarray}

Let $\xi$ be a positive real, $0<\xi<1$. Then, let us write
\begin{eqnarray*}
&& \liminf_{t\to\infty} \p(\vert X_{t}-b_{\l_t}\vert \le \eta \log t)
\\
&\ge & \liminf_{t\to\infty} \p(\vert X_{t}-b_{\l_t}\vert \le \eta
\log t\, ;\, \l_{t}=\l_{t(1+\xi)})
\\
&\ge& \liminf_{t\to\infty} \p (\l_{t}=\l_{t(1+\xi)}) -
\limsup_{t\to\infty} \p (\vert X_{t}-b_{\l_t}\vert > \eta \log t\,
;\, \l_{t}=\l_{t(1+\xi)}).
\end{eqnarray*}

Considering the first term, we get by using Proposition
\ref{p:dynkinbis},
\begin{eqnarray}
\liminf_{t\to\infty} \p (\l_{t}=\l_{t(1+\xi)} ) &=&
\liminf_{t\to\infty} \p (T_{\l_{t}+1}>t(1+\xi)) \nonumber
\\
&=& {\sin(\kappa\pi)\over \pi} \int_{\xi}^\infty
  {\d x\over x^{\kappa}
(1+x)}. \label{left-term}
\end{eqnarray}

In order to estimate the second term, let us introduce the event
$$
TT(t):=A(t)\cap A_5(t,\eta)\cap DT(t)\cap DT^*(t)\cap A^*(t) \cap
IA(t)\cap LT(t) \cap IT(t),
$$
where $IT(t):= \{T_{\l_t}\le t < T_{\l_t}+\tau_{\l_t}\}.$ Observe
that the preliminary results obtained in Section
\ref{s:preliminaries} together with Proposition \ref{p:dynkinbis}
imply that $\p(TT(t))\to 1,$ when $t \to \infty.$ Then, we have
\begin{eqnarray*}
&& \limsup_{t\to\infty} \p (\vert X_{t}-b_{\l_t}\vert > \eta \log
t\, ;\, \l_{t}=\l_{t(1+\xi)})
\\
&\le& \limsup_{t\to\infty} \p (TT(t)\,; \, \vert X_{t}-b_{\l_t}\vert
> \eta \log t\, ;\, \l_{t}=\l_{t(1+\xi)})
 \\
&\le & \limsup_{t\to\infty} \e\Big[\indic_{TT(t)} \sum_{i=1}^{K_t}
\indic_{ \{ \vert X_{t}-b_{i}\vert
> \eta \log t\, ;\, \l_{t}=\l_{t(1+\xi)}=i \}}\Big].
\end{eqnarray*}
But on the event $TT(t)\cap \{\l_{t}=\l_{t(1+\xi)}=i\}$ we know that
for all $k\in [T_i, t]$ the walk $X_k$ is in the interval
$[a_i,\overline c_i-1]$. Indeed, on the event $LT(t)\cap DT(t)\cap
IA(t)$ we know that once the position $\overline c_i$ is reached
then within a time $t/\log t$ the position $b_{i+1}$ is reached
which would contradict the fact that $\l_{t(1+\xi)}=i$. Hence, we
obtain, for all $i\in \N,$
\begin{eqnarray*}
&&\p\left( TT(t)\,;\, i\le K_t\,;\, \vert X_{t}-b_i\vert
> \eta \log t\, ;\, \l_{t}=\l_{t(1+\xi)}=i \right)
\\
& \le & \E\Big[ \indic_{\{i\le K_t\}}\indic_{A(t)\cap A_5(t,\eta)}
\sup_{k\in [0,t]} P^{b_i}_{\w, |a_i, \overline c_i|}\left(\vert
X_{k}-b_i\vert
> \eta \log t\right)\Big]
\\
&\le& C (\log t) t^{-C'''\eta},
\end{eqnarray*}
where we used the estimate (\ref{pii}) on the event $A(t)\cap
A_5(t,\eta)$. Considering now that, on the event $A(t),$ the number $K(t)$
of deep valleys is smaller than $(\log t)^{{\kappa+1\over 2}}$ we
get
\begin{eqnarray*}
 \limsup_{t\to\infty} \p (\vert X_{t}-b_{\l_t}\vert > \eta \log
t\, ;\, \l_{t}=\l_{t(1+\xi)} ) &\le & \limsup_{t\to\infty} C (\log
t)^{{3+\kappa\over 2}}t^{-C'''\eta}
\\
&=& 0.
\end{eqnarray*}
Then, letting $\xi$ tends to $0$ in (\ref{left-term}) concludes the
proof of part {\it(i)} of Theorem \ref{t:main+}.
\qed

\section{Part $(ii)$ of Theorem \ref{t:main+}: the quenched law of the last visited valley}
\label{s:quenchedlaw}




In order to prove the proximity of the  distributions of $\l_{t}$ and $\l_{t,\omega}^{({\bf e})}$, we go through
$\l_{t}^*=\sup\{n \ge 0, \;\; \tau_1^*+\cdots +\tau_n^*\le t\}$ whose advantage is to involve independent random variables whose laws are clearly identified.

\begin{proposition}\label{p:dTV}
Under assumptions $(a)$-$(b)$ of Theorem \ref{t:main}, we have, for all $\delta>0,$
$$
\lim_{t\to \infty} P\Big(d_{TV}(\l_{t}^{*},\l_{t,\omega}^{({\bf e})})>\delta\Big)=0,$$
where $d_{TV}$ denotes the distance in total variation.
\end{proposition}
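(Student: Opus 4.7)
The plan is to compare the two distributions by controlling, for each $*$-valley $k$, how close the quenched law of the crossing time $\tau_k^*$ is to that of $W_k(\omega){\bf e}_k$, and then to propagate the comparison via the data-processing and tensorisation properties of total variation. Since $\l_t^*$ is a deterministic measurable function of $(\tau_k^*)_{k\ge 1}$ and $\l_{t,\omega}^{({\bf e})}$ of $(W_k(\omega){\bf e}_k)_{k\ge 1}$, and since on the good event $A(t)\cap A^*(t)$ both counting variables are bounded (up to an additive $o_P(1)$ error) by $K_t^*+1$, a first reduction gives
$$
d_{TV}\bigl(\l_t^*,\l_{t,\omega}^{({\bf e})}\bigr)\le d_{TV}\bigl((\tau_k^*)_{k\le K_t^*+1},(W_k(\omega){\bf e}_k)_{k\le K_t^*+1}\bigr)+o_P(1).
$$
Quenched on $\omega$, the $\tau_k^*$ are independent by construction and the ${\bf e}_k$ are independent by definition; the subadditivity of total variation under product measures then yields
$$
d_{TV}\bigl((\tau_k^*)_{k\le K_t^*+1},(W_k(\omega){\bf e}_k)_{k\le K_t^*+1}\bigr)\le \sum_{k=1}^{K_t^*+1} d_{TV}\bigl({\rm Law}_\omega(\tau_k^*),{\rm Law}(W_k(\omega){\bf e}_k)\bigr).
$$
Since $K_t^*\le K_t\le(\log t)^{(1+\kappa)/2}$ on $A(t)$ by Lemma \ref{l:preliminaires}, it suffices to show that each summand is $o_P((\log t)^{-(1+\kappa)/2})$ uniformly in $k\le K_t^*+1$.

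For the single-valley bound I would use the geometric decomposition recalled in the sketch of Lemma \ref{l:laplacetau}: under $P_\omega^{b_k^*,|a_k^*}$,
$$\tau_k^*=F_1+\cdots+F_{N_k}+S_k,$$
with $N_k$ geometric of parameter $1-p_k(\omega)$, the $F_i$'s i.i.d.\ and $S_k$ the successful attempt. The ingredients already present in the paper give, on a good quenched event: (i) the successful attempt $S_k$ is of smaller order than $W_k(\omega)$ with overwhelming probability; (ii) since the valley is deep, $1-p_k(\omega)$ is of order $\ee^{-H_k}$, hence $N_k$ is typically large, and a quantitative law of large numbers controls $\bigl(\sum_{i=1}^{N_k} F_i\bigr)/(N_k E_\omega[F_1])$; (iii) the convolution of the geometric $N_k(1-p_k(\omega))$ with the continuous law of the $F_i$'s converges to $\mathrm{Exp}(1)$ in total variation, the continuous smoothing making up for the fact that $N_k$ is integer-valued; (iv) the identity $\frac{p_k(\omega)}{1-p_k(\omega)} E_\omega[F_1]\sim W_k(\omega)$ which underlies Lemma \ref{l:laplacetau}. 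Combining these, the law of $\tau_k^*/W_k(\omega)$ under $P_\omega$ is close in total variation to $\mathrm{Exp}(1)$, and hence ${\rm Law}_\omega(\tau_k^*)$ is close to ${\rm Law}(W_k(\omega){\bf e}_k)$.

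An equivalent and perhaps more efficient route is to work at the level of the quenched Laplace transforms:
$$
E_\omega\bigl[\ee^{-\lambda\tau_k^*/t}\bigr]=\frac{1-p_k(\omega)}{1-p_k(\omega)E_\omega\bigl[\ee^{-\lambda F_1/t}\bigr]}\approx\frac{1}{1+\lambda W_k(\omega)/t},
$$
the right-hand side being exactly $E^{({\bf e})}[\ee^{-\lambda W_k(\omega){\bf e}_k/t}]$. Since both laws have densities (via the repeated convolution structure of $\tau_k^*$ and the continuity of the exponential law), pointwise Laplace closeness can be inverted to a total variation bound by a standard inversion argument, with a rate controlled by the linearisation errors in items (i)--(iv) above. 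Summing the resulting single-valley estimates over $k\le K_t^*+1$ produces an upper bound that vanishes in $P$-probability, concluding the proof.

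The main obstacle I expect is quantitative rather than conceptual: converting the Laplace (or weak) closeness into a total variation closeness with a rate strong enough to survive the sum over all $K_t^*$ deep valleys. Because $K_t^*$ can be of order $(\log t)^{(1+\kappa)/2}$, each single-valley $d_{TV}$ must be at least polylogarithmically small, which forces one to quantify simultaneously, uniformly on a $P$-typical environment, the geometric-to-exponential convergence, the concentration of $\sum_{i=1}^{N_k} F_i$, and the approximation $\tfrac{p_k(\omega)}{1-p_k(\omega)} E_\omega[F_1]\sim W_k(\omega)$ with explicit polynomial rates in $t$.
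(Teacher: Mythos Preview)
There is a genuine gap in your approach, and it is not just the quantitative issue you flag at the end: the single-valley bound you need is simply false. The crossing time $\tau_k^*$ is the hitting time of a \emph{discrete-time} nearest-neighbour walk, so under $P_\omega$ it takes values in $\N$. The random variable $W_k(\omega){\bf e}_k$, on the other hand, has a density on $(0,\infty)$. Hence
\[
d_{TV}\bigl(\mathrm{Law}_\omega(\tau_k^*),\,\mathrm{Law}(W_k(\omega){\bf e}_k)\bigr)=1
\]
for every $k$ and every $\omega$, and the subadditivity bound $\sum_{k\le K_t^*+1} d_{TV}(\cdot,\cdot)$ is useless. Your claim that ``both laws have densities (via the repeated convolution structure of $\tau_k^*$)'' is incorrect: convolving lattice distributions produces a lattice distribution. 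Rescaling by $t$ does not help either, since $\tau_k^*/t$ is still supported on the lattice $t^{-1}\N$. The data-processing route you propose therefore cannot be salvaged at the level of individual $\tau_k^*$ versus $W_k(\omega){\bf e}_k$.

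The paper avoids this obstruction by building an explicit \emph{coupling} rather than bounding marginal total variation distances. From each exponential ${\bf e}_i$ one manufactures the geometric count $N_i=\lfloor -{\bf e}_i/\log p_i(\omega)\rfloor$ of failed attempts, and one shows that with high $\p$-probability the partial sums satisfy, for every $i\le K_t$ simultaneously,
\[
(1-\xi)\sum_{k=1}^i \tau_k^* \;\le\; \sum_{k=1}^i W_k(\omega){\bf e}_k \;\le\; (1+\xi)\sum_{k=1}^i \tau_k^*.
\]
This multiplicative sandwich (rather than closeness in total variation) gives $\l_t^*\le \l^{({\bf e})}_{t/(1-\xi),\omega}$ and $\l^{({\bf e})}_{t,\omega}\le \l^*_{t(1+\xi)}$ with high probability, and one closes the argument with the stability $\lim_{\xi\to 0}\p(\l_t^*=\l_{t(1+\xi)}^*)=1$ (and the analogue for $\l^{({\bf e})}$), which comes from the Dynkin-type renewal picture. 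The moral is that the right comparison is at the level of partial sums modulo a multiplicative $(1\pm\xi)$, not at the level of single-valley laws in total variation; this is what absorbs the discrete/continuous mismatch.
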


\begin{proof}
The strategy is to build a coupling between $\l_{t}^{*}$ and $\l_{t,\omega}^{({\bf e})}$
 such that $$\lim_{t\to\infty}P(P_{0,\omega}(\l_{t}^{*}  \neq  \l_{t,\omega}^{({\bf e})})>\delta)= 0.$$
Let us first associate to the exponential variable ${\bf e}_i$ the following geometric random variable
$$N_i:= \Big\lfloor\big(-\ds{1\over \log(p_i (\omega))}\big){\bf e}_i\Big\rfloor,$$
 where $1-p_i(\omega)$ denotes the probability for the random walk  starting at $b_i$ to go to
 $d_i$ before returning to $b_i$, which is equal to $\omega_b{\ee^{V(b_i)}\over \sum_{x=b_i}^{d_i-1}\ee^{V(x)}}$.
 The parameter of this geometric law is now clearly equal to $1-p_i(\omega)$.

Now one can introduce like in \cite{enriquez-sabot-zindy-1} two random variables $F^{(i)}$ (resp. $S^{(i)}$) whose law are given by the time it takes for the random walk reflected at $a_i$, starting at $b_i,$ to return to $b_i$ (resp. to hit $d_i$) conditional on the event that $d_i$ (resp. $b_i$) is not reached in between.

We introduce now a sequence of   independent copies of $F^{(i)}$
we denote by $(F^{(i)}_n)_{n\geq0}$. The law of $\tau_i^*$ is
clearly the same as $F^{(i)}_1+\dots+F^{(i)}_{N_i}+S^{(i)}$ which
is going now to be compared with $E_{\omega}[\tau_i^*]{\bf e}_i$.

Let us now estimate, for a given $\xi>0$ (small enough),

$\p\Big(\forall i\leq K_t,$

\hfill{$(1-\xi)(F^{(i)}_1+\dots+F^{(i)}_{N_i}+S^{(i)})\leq E_{\omega}[\tau_i^*]{\bf e}_i
<(1+\xi)(F^{(i)}_1+\dots+F^{(i)}_{N_i}+S^{(i)})\Big)$}

$\geq\p\left(\forall i\leq K_t,\quad(1-{\xi\over2})(F^{(i)}_1+\dots+F^{(i)}_{N_i})\leq E_{\omega}[\tau_i^*]{\bf e}_i
<(1+{\xi\over2})(F^{(i)}_1+\dots+F^{(i)}_{N_i})\right)$
\begin{eqnarray}\label{smaller-bound}
\quad-\,\p\Big(\exists i\leq K_t,\quad S^{(i)}>{\xi \over3}(F^{(i)}_1+\dots+F^{(i)}_{N_i})\Big).
\end{eqnarray}

Let us first treat the second quantity of the rhs of (\ref{smaller-bound}). For this purpose, we need an upper bound for $E_\omega[S^{(i)}]$ which is obtained exactly like in Lemma $13$ of  \cite{enriquez-sabot-zindy-1} and can be estimated by controlling the size of the falls (resp. rises)  of the potential during its rises from $V(b_i)$ to $V(c_i)$
(resp. falls from $V(c_i)$ to $V(d_i)$), see Lemma \ref{l:potfluctu}. Indeed, the random variable $S^{(i)}$ concerns actually the random walk which is {\it conditioned} to hit $d_i$ before $b_i$. Therefore, this involves an $h$-process which can be viewed as a random walk in a modified potential between $b_i$ and $d_i$. This modified potential
has a decreasing trend (which encourages the particle to go to the right), and the main contribution to  $S^{(i)}$ comes from the small risings of this modified potential along its global fall.


More precisely, the particle starting at $b_i$ which is conditioned to hit $d_i$ before returning to $b_i$ moves like a particle in the  modified random potential $\bar{V}^{(i)}$ defined as follows: for all $b_i\le x < y \le d_i,$
\begin{eqnarray}
\label{Vbareq0} \bar{V}^{(i)}(y)-\bar{V}^{(i)}(x)=\left(V(y)-V(x)\right)+ \log
\bigg({ g^{(i)}(x) \, g^{(i)}(x+1) \over g^{(i)}(y) g^{(i)}(y+1)} \bigg),
\end{eqnarray}
where $g^{(i)}(x):=
P_{\omega}^{x}(\tau(d_i) < \tau(b_i)).$
The expectation of $S^{(i)}$ is given by the usual formula (see \cite{zeitouni}), so that
$$E_{\omega}[S^{(i)}] \le1 + \sum_{k=b_i+1}^{d_i}
    \sum_{l=k}^{d_i} \ee^{\bar{V}^{(i)}(l)-\bar{V}^{(i)}(k)}.$$
We are therefore concerned by the largest rise of $\bar{V}^{(i)}$ inside the interval $[b_i, d_i]$.
We first notice that, by standard arguments, for any $b_i\le x < y \le d_i,$
\begin{eqnarray}
\label{Vbareq1} { g^{(i)}(x) \, g^{(i)}(x+1) \over g^{(i)}(y) \, g^{(i)}(y+1)}={
\sum_{j=b_i}^{x-1} \ee^{V(j)}
 \, \sum_{j=b_i}^{x} \ee^{V(j)} \over \sum_{j=b_i}^{y-1} \ee^{V(j)}
 \, \sum_{j=b_i}^{y} \ee^{V(j)}}\le 1.
\end{eqnarray}
Therefore, we obtain for any $b_i\le x < y \le d_i$
\begin{eqnarray}
\label{Vbareq2}
   \bar{V}^{(i)}(y)-\bar{V}^{(i)}(x) \le V(y)-V(x).
\end{eqnarray}
This allows to bound the largest rise of $\bar{V}^{(i)}$ on the interval $[c_i, d_i]$ by the largest rise of $V$ on this interval.

Concerning the largest rise of $\bar{V}^{(i)}$ on the interval $[b_i,c_i]$, we notice, taking into account the small size of the fluctuations of $V$ described in Lemma  \ref{l:potfluctu},   (\ref{Vbareq1}) and (\ref{Vbareq2}), that for all $\eta>0$, for all $\omega\in A_4(t)\cap F_\eta(t)$, and for all $i\leq K_t$, the difference $\bar{V}^{(i)}(y)-\bar{V}^{(i)}(x)$ is less than or equal to
\begin{eqnarray*}
 &&[V(y)-\max_{b_i \le j \le y} V(j)]-[V(x)-\max_{b_i \le j \le
x} V(j)]+O( \log \log t)
\\
&\le& \eta \log t + O( \log \log t).
\end{eqnarray*}
This reasoning  yields for all $\eta>0 $ that, for all $\omega\in A_4(t)\cap F_\eta(t),$
$$\forall i\leq K_t,\quad E_\omega[S^{(i)}]\leq t^\eta.$$
This implies, by the Markov inequality, that, for all $\eta>0$ and all $\omega\in A_4(t)\cap F_\eta(t),$
$$\forall i\leq K_t,\quad P_\omega(S^{(i)}>t^{2\eta})<{1\over t^{\eta}}.$$
On the other hand, we have
$$P_\omega(F^{(i)}_1+\dots+F^{(i)}_{N_i}<t^{2\eta})\leq P_\omega(N_i<t^{2\eta})=1-p_i(\omega)^{\lfloor t^{2\eta} \rfloor}=O\left({t^{2\eta}\log t\over t}\right),
$$
the last equality coming from the definition of  $h_t:= \log t-\log(\log t)$, which implies that $1-p_i(\omega)$ is smaller than ${\log t\over  t}.$
Hence, since  $A_2(t) = \{K_t\le (\log t)^{{1+\kappa\over 2}}\}$ satisfies $P(A_2(t))\to1$ (see Lemma \ref{l:preliminaires}), we obtain
$$\lim_{t\to+\infty}P\Big(\forall i\leq K_t,\quad P_\omega(F^{(i)}_1+\dots+F^{(i)}_{N_i}< t^{2\eta})\leq{1\over t^{{1\over2}-2\eta}}\Big)=1.$$
Gathering these two informations on $S^{(i)}$ and $F^{(i)}_1+\dots+F^{(i)}_{N_i}$, we obtain
$$\lim_{t\to+\infty}\p\Big(\forall i\leq K_t,\quad S^{(i)}<{\xi \over3}(F^{(i)}_1+\dots+F^{(i)}_{N_i})\Big)=1,$$
for all $\xi>0,$ which treats the second quantity of the rhs  of (\ref{smaller-bound}).

The first quantity of the rhs of (\ref{smaller-bound})  is treated by going through
$$\p\Big((1-{\xi\over4})N_iE_{\omega}[F^{(i)}]\leq F^{(i)}_1+\dots+F^{(i)}_{N_i}\leq (1+{\xi\over4})N_iE_{\omega}[F^{(i)}]\Big),$$ which, for all $\eta>0$,  is larger than
$$1-\p\left(\left\{\left\vert { F^{(i)}_1+\dots+F^{(i)}_{N_i}\over N_i}-E_\omega[F^{(i)}]
\right\vert> {\xi\over4} E_\omega[F^{(i)}] \right\}\cap \{N_i\neq 0 \} \cap \{E_\omega[(F^{(i)})^2] \leq t^\eta\}\right) $$
$$ -P(E_\omega[(F^{(i)})^2] \geq t^\eta),$$
which is in turn, using the Bienaim\'e-Chebychev's inequality, larger than
$$1- E\Big[E({t^\eta\over N_i}{\bf 1}_{\{N_i\neq0\}}\, |\, N_i){16\over \xi^2 E_\omega \left[F\right] ^2}\Big]
-P(E_\omega[(F^{(i)})^2] \geq t^\eta)$$
$$ \geq 1-{16 t^\eta\over\xi^2}\E\Big[{1\over N_i}{\bf 1}_{\{N_i\neq0\}}\Big]-P(E_\omega[(F^{(i)})^2] \geq t^\eta).$$
Now, we use again the reasoning based on $h$-processes to get an upper bound for $E_\omega[(F^{(i)})^2] $. Like in the success case, the particle starting at $b_i$ which is conditioned to hit $b_i$ before returning to $d_i$ moves like a particle in the  modified random potential $\widehat{V}^{(i)}$ defined as follows: for all $a_i\le x < y \le d_i,$
\begin{eqnarray}
\label{Vbareq0} \widehat{V}^{(i)}(y)-\widehat{V}^{(i)}(x)=\left(V(y)-V(x)\right)+ \log
\bigg({ h^{(i)}(x) \, h^{(i)}(x+1) \over h^{(i)}(y) h^{(i)}(y+1)} \bigg),
\end{eqnarray}
where $h^{(i)}(x):=
P_{\omega}^{x}(\tau(b_i) < \tau(d_i))$ (notice that $V$ and $\widehat V^{(i)}$ coincide on the interval $[a_i,b_i]$).

It happens now that $E_\omega[(F^{(i)})^2] $ can be computed explicitly in terms of $\widehat V^{(i)}$ (see Lemma 12 in \cite{enriquez-sabot-zindy-1}), and is bounded by a constant times $(d^{(i)}-a^{(i)})^2$ times the exponential of the maximum of the largest rise of $V$ on $[a_i, b_i]$ and the largest fall of $\widehat V^{(i)}$ on $[b_i, d_i]$, which are treated in a similar way as the fluctuations of $\bar V^{(i)}$, above. So, we get
 $$\forall \eta>0, \quad P(E_\omega[(F^{(i)})^2] \geq t^\eta)=o\Big({1\over (\log t)^{2}}\Big).$$
Moreover, we have
$$E\Big[{1\over N_i}{\bf 1}_{\{N_i\neq0\}}\Big]= E\Big[-{1-p_i(\omega)\over p_i(\omega)}\log(1-p_i(\omega))\Big]=O \Big({(\log t)^2\over t}\Big).$$
As a result, we obtain $$\p\Big((1-{\xi\over4})N_iE_{\omega}[F^{(i)}]\leq F^{(i)}_1+\dots+F^{(i)}_{N_i}\leq (1+{\xi\over4})N_iE_{\omega}[F^{(i)}]\Big)= 1-o\Big({1\over (\log t)^2}\Big).$$

Now, the second step in the estimation of the first quantity of the rhs of  (\ref{smaller-bound}) is the examination, for $\xi>0$, of
$$\p\Big((1-{\xi\over4})N_iE_{\omega}[F^{(i)}]\leq E_\omega[\tau_i]{\bf e}_i \leq (1+{\xi\over4})N_iE_{\omega}[F^{(i)}]\Big),$$
i.e.
$$\p\Big((1-{\xi\over4})N_iE_{\omega}[F^{(i)}]\leq (E_\omega[N_i]E_{\omega}[F^{(i)}]+E_\omega[S^{(i)}]){\bf e}_i \leq (1+{\xi\over4})N_iE_{\omega}[F^{(i)}]\Big).$$
Neglecting again, like above, the contribution of $S^{(i)}$ we are back to prove that
$$\p\Big((1-{\xi\over4})\Big\lfloor(-\ds {1\over \log(p_i (\omega))}){\bf e}_i\Big\rfloor\leq {p_i (\omega)\over 1-p_i(\omega)}
{\bf e}_i\leq (1+{\xi\over4})\Big\lfloor(-\ds { 1 \over \log(p_i (\omega))}){\bf e}_i\Big\rfloor\Big)=1-o\Big({1\over (\log t)^2}\Big),$$
which is a direct consequence of $1-p_i(\omega)\leq{\log t\over  t}$ allied with
$$P^{({\bf e})} \Big({\bf e}_i>{\log t\over t}\Big)=1-o\Big({1\over (\log t)^2}\Big).$$
Now, since $P(K_t\le (\log t)^{{1+\kappa\over 2}})\to 1,$ when $t \to \infty,$ this concludes the proof that the rhs (and therefore the lhs) of  (\ref{smaller-bound}) tends to $1$ when $t$ tends to infinity. Indeed, we  obtain
$$\p\Big(\forall i\leq K_t,\, (1-\xi)(F^{(i)}_1+\dots+F^{(i)}_{N_i}+S^{(i)})\leq E_{\omega}[\tau_i^*]{\bf e}_i
<(1+\xi)(F^{(i)}_1+\dots+F^{(i)}_{N_i}+S^{(i)})\Big)\to1,$$
from which we deduce
$$\p\Big(\forall i\leq K_t,\; (1-\xi)(\tau_1^*+\cdots+\tau_i^*)\leq \sum_{k=1}^iE_{\omega}[\tau_k^*]{\bf e}_k
<(1+\xi)(\tau_1^*+\cdots+\tau_i^*)\Big)\to1.
$$
Moreover, we use the fact that
$$
E_{\omega}[\tau_k^*]=W^*_k- (d_k^*-b_k^*),
$$
where (cf for example \cite{zeitouni}, formula (2.1.14))
$$ W_k^*=2 \sum_{{a^*_k\leq m\leq n}\atop{b^*_k\leq n\leq
d^*_k}}e^{V_\omega(n)-V_\omega(m)}.
$$
 Since on the event
$A_4(t)$ we have for all $k\le K_t$, $d_k^*-b_k^*\le C''\log t$,
we see that on $A_4(t)$ we have $(1-C''{(\log t)^2 \over t})
W_k^*\le E_{\omega}[\tau_k^*] \le W_k^*$, since $W_k^*\ge e^{h_t}$ by
definition. Hence, it implies that
$$
\p\Big(\forall i\leq K_t,\; (1-\xi)(\tau_1^*+\cdots+\tau_i^*)\leq
\sum_{k=1}^i W^*_k {\bf e}_k
<(1+\xi)(\tau_1^*+\cdots+\tau_i^*)\Big)\to1.
$$
Applying this, for $i=\l_t^*$ and $i=\l_{t,\omega}^{({\bf e})}$ we get respectively
that, for all $\xi>0$, $$\p\Big(\l_t^*\leq\l_{{t\over 1-\xi},\omega}^{({\bf e})}\Big)\to 1\quad\hbox{and} \quad
\p(\l_{t,\omega}^{({\bf e})}\leq \l_{t(1+\xi)}^*)\to 1.$$
We conclude now the proof by reminding that $\lim_{\xi\to0}\p(\l_t^*=\l_{t(1+\xi)}^{*})=1$
as well as  $\lim_{\xi\to0}\p(\l_{t,\omega}^{(\bf e)}=\l_{t(1+\xi),\omega}^{(\bf e)})=1.$
\end{proof}

\medskip
\medskip
{\it Proof of Part (ii) of Theorem \ref{t:main+}.} The passage from Proposition \ref{p:dTV} to Part $(ii)$ of  Theorem \ref{t:main+}  is of the same kind as
the passage from Proposition \ref{p:dynkin} to Proposition \ref{p:dynkinbis}.\qed

 \section{Proof of Theorem \ref{t:main}}
\label{s:proof}
We fix $h>1$ and $\eta>0$ ($\eta$ was used to define the event
$A_5(t,\eta)$ before Lemma \ref{l:sharpness}). Let us introduce the event
$$
TT(t, h):= TT(t) \cap \{X_t-b_{\l_t} \le {\eta \over 2} \log t\} \cap
\{X_{th}-b_{\l_{th}} \le {\eta \over 2} \log t\},
$$
whose probability tends to 1, when $t$ tends to infinity (it is a
consequence of Section \ref{s:preliminaries} together with part $(i)$ of
Theorem \ref{t:main+}). Then, we easily have
$$
\left(\{ \l_{th}=\l_t\} \cap  TT(t,h) \right)  \subset \left( \{\vert
X_{th}-X_t\vert\le \eta \log t\} \cap TT(t,h)\right).
$$
Moreover, observe that on $TT(t),$ $\l_{th}> \l_t$ implies that $\vert
b_{\l_{th}}-b_{\l_{t}} \vert \ge t^{\kappa/2}$ (by definition of
$A_3(t)$). Therefore, we get
$$\left( \{\vert X_{th}-X_t\vert\le \eta \log t\}
\cap TT(t,h)\right)  \subset \left(\{ \l_{th}=\l_t\} \cap
TT(t,h)\right),
$$
for all large $t.$ Thus, since Proposition \ref{p:dynkinbis} implies
that $\lim_{t\to\infty} \p (\l_{th}=\l_t)$ exists, we obtain
\begin{eqnarray*}
\lim_{t\to\infty} \p( \vert X_{th} -X_t \vert \le \eta \log t) &=
& \lim_{t\to\infty} \p (\l_{th}=\l_t)
\\
&=& \lim_{t\to\infty} \p( T_{\l_t+1}\ge th)
\\
&=&  {\sin(\kappa\pi)\over \pi} \int_{h-1}^{\infty} {\d x \over
x^{\kappa} (1+x)}
\\
&=&  {\sin(\kappa\pi)\over \pi} \int_{0}^{1/h} y^{\kappa-1} (1-y)^{-\kappa}\d y,
\end{eqnarray*}
which concludes the proof of Theorem \ref{t:main}.
\qed

\medskip
\medskip

\end{document}